\documentclass[11pt]{article}
\usepackage{amsmath, amssymb, amscd, amsthm, amsfonts}
\usepackage{graphicx}
\usepackage{hyperref}
\usepackage{amssymb}
\usepackage{amsmath}
\usepackage{amsthm}
\usepackage{amsfonts}

\usepackage{a4wide}
\usepackage{longtable}
\usepackage{multirow}

\usepackage[latin1]{inputenc}
\usepackage[english]{babel}
\usepackage[T1]{fontenc}
\usepackage{amssymb,amsmath,amsthm,amstext,amsfonts,latexsym}

\usepackage{pgf,tikz}
\usepackage{mathrsfs}
\usetikzlibrary{arrows}

\newtheorem{proposition}{Proposition}[section]
\newtheorem{theorem}{Theorem}[section]

\theoremstyle{definition}
\newtheorem{definition}{Definition}[section]

\theoremstyle{remark}

\usepackage[all]{xy}
\oddsidemargin 0pt
\evensidemargin 0pt
\marginparwidth 40pt
\marginparsep 10pt
\topmargin -20pt
\headsep 10pt
\textheight 8.7in
\textwidth 6.65in
\linespread{1.2}

\title{On the capitulation problem of some pure metacyclic fields\\ of degree 20 II}

\date{}

\begin{document}

\maketitle
\begin{center}
{\sc Fouad ELMOUHIB (\textcolor{blue}{the corresponding author}) }\\
{\footnotesize Department of Mathematics and Computer Sciences,\\
Mohammed First University, Oujda, Morocco,\\
\textcolor{blue}{Correspondence: fouad.cd@gmail.com}\\
\vspace{0.7cm}}
{\sc Mohamed TALBI }\\
{\footnotesize Regional Center of Professions of Education and Training,\\
ksirat1971@gmail.com}\\

\vspace{0.7cm}
{\sc Abdelmalek AZIZI }\\
{\footnotesize Department of Mathematics and Computer Sciences,\\
Mohammed First University, Oujda, Morocco,\\
abdelmalekazizi@yahoo.fr}
\end{center}

\begin{abstract}
Let $n$ be a $5^{th}$ power-free naturel number and $k_0\,=\,\mathbb{Q}(\zeta_5)$ be the cyclotomic field generated by a primitive $5^{th}$ root of unity $\zeta_5$. Then $k\,=\,\mathbb{Q}(\sqrt[5]{n},\zeta_5)$ is a pure metacyclic field of absolute degree $20$. In the case that $k$ possesses a $5$-class group $C_{k,5}$ of type $(5,5)$ and all the classes are ambiguous under the action of $Gal(k/k_0)$, the capitulation of $5$-ideal classes of $k$ in its unramified cyclic quintic  extensions is determined.
\end{abstract}

\textbf{Key words}: pure metacyclic fields, 5-class groups, Hilbert 5-class field, Capitulation.\\
\textbf{AMS Mathematics Subject Classification}: 11R04, 11R18, 11R29, 11R37
\begin{center}
\section{Introduction}\label{introduction}
\end{center}
Let $k$ be a number field, and $L$ be an unramified abelian extension of $k$. We say that an ideal $\mathcal{I}$ of $k$ or its class capitulates in $L$ if $\mathcal{I}$ becomes principal in $L$.\\
Let $\Gamma\,=\,\mathbb{Q}(\sqrt[5]{n})$ be a pure quintic field, where $n$ is a $5^{th}$ power free naturel number and $k_0\,=\,\mathbb{Q}(\zeta_5)$ be the cyclotomic field generated by a primitive $5^{th}$ root of unity $\zeta_5$. Then $k\,=\,\Gamma(\zeta_5)$ is the normal closure of $\Gamma$. Let $k_5^{(1)}$ be the Hilbert $5$-class field  of $k$, $C_{k,5}$ be the $5$-ideal classes group of $k$, and $C_{k,5}^{(\sigma)}$ be the group of ambiguous ideal classes under the action of $Gal(k/k_0)\,=\,\langle\sigma\rangle$. In the case that $C_{k,5}$ is of type $(5,5)$ and rank $C_{k,5}^{(\sigma)}\,=\,1$, the capitulation of the $5$-ideal classes of $k$ in the six intermediate extensions of $k_5^{(1)}/k$ is determined in [\ref{FOU3}].\\
In this paper, we investigate the capitulation of $5$-ideal classes of $k$ in the unramified cyclic quintic extensions of $k_5^{(1)}/k$, whenever $C_{k,5}$ is of type $(5,5)$ and rank $C_{k,5}^{(\sigma)}\,=\,2$, which mean that all classes are ambiguous. Let $p$ and $q$ primes such that $p\,\equiv\,1\,(\mathrm{mod}\,5)$, $q\,\equiv\,\pm2\,(\mathrm{mod}\,5)$. According to [\ref{FOU1}, theorem 1.1], if $C_{k,5}$ is of type $(5,5)$ and rank $C_{k,5}^{(\sigma)}\,=\,2$, we have three forms of the radicand $n$ as follows:
\begin{itemize}
\item[-] $n\,=\,5^ep\,\not\equiv\,\pm1\pm7\,(\mathrm{mod}\,25)$ with $e\in\{1,2,3,4\}$ and $p\,\not\equiv\,1\,(\mathrm{mod}\,25)$.
\item[-] $n\,=\,p^eq\,\equiv\,\pm1\pm7\,(\mathrm{mod}\,25)$ with $e\in\{1,2,3,4\}$, $p\,\not\equiv\,1\,(\mathrm{mod}\,25)$ and $q\,\not\equiv\,\pm7\,(\mathrm{mod}\,25)$.
\item[-] $n\,=\,p^e\,\equiv\,\pm1\pm7\,(\mathrm{mod}\,25)$ with $e\in\{1,2,3,4\}$ and $p\,\equiv\,1\,(\mathrm{mod}\,25)$.
\end{itemize}
We will study the capitulation of $C_{k,5}$ in the six  intermediate extensions of $k_5^{(1)}/k$ in these cases.
The theoretical results are underpinned by numerical examples obtained with the computational number theory system PARI/GP [\ref{PARI}].
\begin{center}

$\textbf{Notations. \ }$
\end{center}
Throughout this paper, we use the following notations:
\begin{itemize}
 \item The lower case letter $p$ and $q$ denote a prime numbers such that, $p\,\equiv\,1\,(\mathrm{mod}\,5)$ and $q\,\equiv\, \pm2\,(\mathrm{mod}\,5)$.

 \item $\Gamma\,=\,\mathbb{Q}(\sqrt[5]{n})$: a pure quintic field, where $n\neq 1$ is a $5^{th}$ power-free naturel number.
 
 \item $k_0\,=\,\mathbb{Q}(\zeta_5)$: the cyclotomic field, where $\zeta_5\,=\,e^{2i\pi/5}$ is a primitive $5^{th}$ root of unity.
 
 \item $k\,=\,\mathbb{Q}(\sqrt[5]{n},\zeta_5)$: the normal closure of $\Gamma$, a quintic Kummer extension of $k_0$.
 
 
 \item $\langle\tau\rangle\,=\,\operatorname{Gal}(k/\Gamma)$ such that $\tau$ is identity on $\Gamma$, and sends $\zeta_5$ to its square. Hence $\tau$ has order 4.
 
 \item $\langle\sigma\rangle\,=\,\operatorname{Gal}(k/k_0)$ such that $\sigma$ is identity on $k_0$, and sends $\sqrt[5]{n}$ to $\zeta_5\sqrt[5]{n}$. Hence $\sigma$ has order 5.
 
 \item For a number field $L$, denote by:
\begin{itemize}
   \item $\mathcal{O}_{L}$: the ring of integers of $L$;
   \item $C_{L}$, $h_{L}$, $C_{L,5}$: the class group, class number, and $5$-class group of $L$.
   \item $L_5^{(1)}, L^*$: the Hilbert $5$-class field of $L$, and the absolute genus field of $L$. 
   \item $[\mathcal{I}]$: the class of a fractional ideal $\mathcal{I}$ in the class group of $L$.
  \end{itemize}
\item $(\frac{a}{b})_5\,=\,1$  $\Leftrightarrow$ $X^5 \equiv a\,(\mathrm{mod}\, b)$ resolved on $\mathcal{O}_{k_0}$, where $a, b$ are primes in $\mathcal{O}_{k_0}$.
\end{itemize}

\begin{center}
\begin{tikzpicture}
\begin{scope}[xscale=2,yscale=2]
  \node (P) at (0,3) {$k_5^{(1)}$};
  \node (J) at (0.5,1.5) {$K_4$};
  \node (K) at (1,1.5) {$K_5$};
  \node (L) at (1.5,1.5) {$K_6$};
  \node (M) at (-0.5,1.5) {$K_3$};
  \node (N) at (-1,1.5) {$K_2$};
  \node (O) at (-1.5,1.5) {$K_1$};
  
  \node (A) at (0,0) {$k$};
  \draw [-,>=latex] (A) -- (O)   node[midway,below left] {};
  \draw [-,>=latex] (A) -- (N)   node[midway,below left] {};
  \draw [-,>=latex] (A) -- (M)   node[midway,below left] {};
  \draw [-,>=latex] (A) -- (J)   node[midway,below left] {};
  \draw [-,>=latex] (A) -- (K)   node[midway,below left] {};
  \draw [-,>=latex] (A) -- (L)   node[midway,below left] {};  
 
  \draw [-,>=latex] (M) -- (P)   node[midway,below left] {};
  \draw [-,>=latex] (N) -- (P)   node[midway,below left] {};
  \draw [-,>=latex] (O) -- (P)   node[midway,below left] {};
  \draw [-,>=latex] (J) -- (P)   node[midway,below left] {};
  \draw [-,>=latex] (K) -- (P)   node[midway,below left] {};
  \draw [-,>=latex] (L) -- (P)   node[midway,below left] {};  
  
\end{scope}
\end{tikzpicture}
\end{center}
\begin{center}
Figure 1: The unramified quintic sub-extensions of $k_5^{(1)}
/k$
\end{center}

\begin{center}
\section{Preliminaries}
\end{center}
\subsection{Decomposition laws in Kummer extension}
Since the extensions of $k$ and $k_0$ are all Kummer's extensions, we recall the decomposition laws of ideals in these extensions.
\begin{proposition}.\label{prKummer}Let $L$ a number field contains the $l^{th}$ root of unity, where $l$ is prime, and $\theta$ element of $L$, such that $\theta\,\neq\,\mu^l$, for all $\mu \in L$, therefore $L(\sqrt[l]{\theta})$ is cyclic extension of degree $l$ over $L$. We note by $\zeta$ a $l^{th}$ primitive root of unity.\\ 
(1) We assume that a prime $\mathcal{P}$ of $L$, divides exactly $\theta$ to the power $\mathcal{P}^a$.
\begin{itemize}
\item If $a=0$ and $\mathcal{P}$ don't divides $l$, then $\mathcal{P}$ split completly in $L(\sqrt[l]{\theta})$ when the congruence\\ $\theta\,\equiv\,X^l\,(\mathrm{mod}\,\mathcal{P})$ has solution in $L$.
\item If $a=0$ and $\mathcal{P}$ don't divides $l$, then $\mathcal{P}$ is inert in $L(\sqrt[l]{\theta})$ when the congruence $\theta\,\equiv\,X^l\,(\mathrm{mod}\,\mathcal{P})$ has no solution in $L$.
\item If $l\nmid a$, then $\mathcal{P}$ is totaly ramified in $L(\sqrt[l]{\theta})$.
\end{itemize}
(2) Let $\mathcal{B}$ a prime factor of $1-\zeta$ that divides $1-\zeta$ exactly to the a$^{th}$ power. Suppose that $\mathcal{B}\nmid\theta$, then $\mathcal{B}$ split completly in $L(\sqrt[l]{\theta})$ if the congruence
\begin{center}
$\theta\,\equiv\,X^l\,(\mathrm{mod}\,\mathcal{B}^{al+1})$ \hspace{2cm} $(*)$ 
\end{center} 
has solution in $L$. the ideal $\mathcal{B}$ is inert in $L(\sqrt[l]{\theta})$ if the congruence
\begin{center}
$\theta\,\equiv\,X^l\,(\mathrm{mod}\,\mathcal{B}^{al})$ 
\hspace{2cm} $(**)$
\end{center}
has solution in $L$, without $(*)$ has. The ideal $\mathcal{B}$ is totaly ramified in $L$ if the congruence $(**)$ has no solution.
\end{proposition}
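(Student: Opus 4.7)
The plan is to combine the Dedekind--Kummer theorem with a valuation-theoretic analysis, treating tamely and wildly ramified primes separately. For part (1) with $a = 0$ and $\mathcal{P} \nmid l$, the polynomial $f(X) = X^l - \theta$ is separable modulo $\mathcal{P}$ since its derivative $lX^{l-1}$ is a unit times $X^{l-1}$ and hence coprime to $f$ in the residue field. Dedekind--Kummer then reduces the decomposition of $\mathcal{P}$ in $L(\sqrt[l]{\theta})$ to the factorization of $f$ in $\mathcal{O}_L/\mathcal{P}[X]$. Since $\zeta \in L$, a single root of $f$ in the residue field forces the presence of all $l$ roots, so $f$ either splits into $l$ linear factors (giving $\mathcal{P}$ completely split, matching solvability of $\theta \equiv X^l \pmod{\mathcal{P}}$) or, because $l$ is prime, remains irreducible of degree $l$ (the inert case). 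When $l \nmid a$, a valuation argument suffices: if $\mathcal{P}'$ lies above $\mathcal{P}$ with ramification index $e$, then $e \cdot a = l \cdot v_{\mathcal{P}'}(\sqrt[l]{\theta})$ forces $l \mid e$, and as $e \le l$ we obtain $e = l$ and total ramification.

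The real work lies in (2), where $\mathcal{B}$ divides $l$ and $f$ has discriminant divisible by $\mathcal{B}$, so Dedekind--Kummer does not apply directly. The strategy is to replace $\sqrt[l]{\theta}$ by the refined generator $\omega = (\sqrt[l]{\theta} - x)/(1 - \zeta)$ for a suitable approximant $x \in \mathcal{O}_L$ to an $l$-th root of $\theta$. Using that $l$ equals a unit times $(1-\zeta)^{l-1}$ in $\mathbb{Z}[\zeta]$ and hence $v_{\mathcal{B}}(l) = a(l-1)$, I would expand $(x + (1-\zeta)Y)^l - \theta$ and track the $\mathcal{B}$-adic valuation of each binomial coefficient. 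The minimal polynomial of $\omega$ becomes amenable to Dedekind--Kummer precisely when $v_{\mathcal{B}}(\theta - x^l) \ge al + 1$, which is condition $(*)$. In that situation the reduced polynomial splits into $l$ distinct linear factors and $\mathcal{B}$ splits completely. If only the weaker $(**)$ holds, the analogous reduction yields an irreducible polynomial of degree $l$ over the residue field, producing the inert case. If neither congruence holds, no integral substitution can remove the ramification and the valuation argument of part (1) forces $e = l$.

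The main obstacle is the bookkeeping in part (2): one has to verify that every intermediate term $\binom{l}{j} x^{l-j}(1-\zeta)^j Y^j$ has $\mathcal{B}$-valuation at least $al$, so that after dividing by $(1-\zeta)^l$ the behaviour mod $\mathcal{B}$ is controlled exactly by whether $(\theta - x^l)/(1-\zeta)^{al}$ reduces to zero, a non-zero $l$-th power, or a non-$l$-th power. Exhaustiveness of the trichotomy in both parts is finally secured by the identity $\sum_i e_i f_i = l$ in a cyclic extension of prime degree $l$, which admits only the three decomposition patterns $(e,f,g) \in \{(1,1,l),\,(1,l,1),\,(l,1,1)\}$, corresponding respectively to completely split, inert, and totally ramified.
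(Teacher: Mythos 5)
The paper offers no proof of this proposition at all; it simply cites Hecke's \emph{Lectures on the Theory of Algebraic Numbers}. Your sketch is essentially a reconstruction of the classical argument found there (Dedekind--Kummer for the tame primes, the substitution $\omega=(\sqrt[l]{\theta}-x)/(1-\zeta)$ at the wild prime), and part (1) is correct as you present it. In part (2), however, there are two genuine gaps.

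First, your criterion separating the split and inert cases is wrong as stated. After your substitution the reduced polynomial at $\mathcal{B}$ is $\overline{g}(Y)=Y^{l}-\bar{x}^{\,l-1}Y+\bar{d}$, where $\bar{d}$ is the image of $(\theta-x^{l})/(1-\zeta)^{l}$ (the exponent is $l$, not $al$) and where you use $l/(1-\zeta)^{l-1}\equiv(l-1)!\equiv-1\pmod{\mathcal{B}}$. The residue field has characteristic $l$, so \emph{every} element is an $l$-th power; the trichotomy ``zero / non-zero $l$-th power / non-$l$-th power'' therefore cannot distinguish inert from split. The correct dichotomy is of Artin--Schreier type: $\overline{g}$ has a root (and then splits completely, its roots differing by the $l$ multiples of $\bar{x}$) if and only if $\bar{d}$ lies in the image of the additive map $t\mapsto t^{l}-\bar{x}^{\,l-1}t$, and is irreducible otherwise; one must then check that membership of $\bar{d}$ in this index-$l$ subgroup is exactly the condition that $x$ can be refined to a solution of $(*)$. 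This identification is the crux of part (2) and is missing.

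Second, the totally ramified case is not actually proved. The valuation argument of part (1) used $l\nmid v_{\mathcal{P}}(\theta)$, whereas here $v_{\mathcal{B}}(\theta)=0$, so it yields nothing; and exhaustiveness of the patterns $(e,f,g)\in\{(1,1,l),(1,l,1),(l,1,1)\}$ only constrains the possible outcomes --- to deduce ``$(**)$ unsolvable $\Rightarrow$ ramified'' from your two forward implications you would need their converses. The missing step is: set $D=\max_{x}v_{\mathcal{B}}(\theta-x^{l})$; if $D<al$ then $l\nmid D$ (if $D=ld$ with $d<a$, replacing $x$ by $x+t$ with $v_{\mathcal{B}}(t)=d$ and using surjectivity of Frobenius on the residue field strictly increases $D$), and then the factorization $\theta-x^{l}=\prod_{i=0}^{l-1}(\sqrt[l]{\theta}-\zeta^{i}x)$, together with $v_{\mathcal{B}'}\bigl((\zeta^{i}-\zeta^{j})x\bigr)=ea$, forces all $l$ factors to have the same valuation $eD/l$, whence $l\mid e$ and $e=l$.
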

\begin{proof}
see [\ref{Hec}].
\end{proof}

\subsection{Relative genus field $(k/k_0)^*$ of $k$ over $k_0$}
Let $\Gamma\,=\,\mathbb{Q}(\sqrt[5]{n})$ be a pure quintic field, $k_0\,=\,\mathbb{Q}(\zeta_5)$ the $5^{th}$-cyclotomic field and $k\,=\,\Gamma(\zeta_5)$ be the normal closure of $\Gamma$. The relative genus field $(k/k_0)^*$ of $k$ over $k_0$ is the maximal abelian extension of $k_0$ which is contained in the Hilbert $5$-class field $k_5^{(1)}$ of $k$. Let $q^* \in \{0, 1, 2\}$  such that\\ $q^*$ = $ \begin{cases}
2 & \text{if }\, \zeta_5 , \zeta_5+1\, \text{are norm of element in}\, k-\{0\}.\\
1 & \text{if}\,\, \zeta_5^i(\zeta_5+1)^j\, \text{is the norm of an element in}\,k-\{0\}\, \text{for some exponents i and j}.\\
0 & \text{if for no exponents i, j  the element}\,\, \zeta_5^i(\zeta_5+1)^j\, \text{is a norm from}\,\, k-\{0\}.\\
\end{cases}$
\begin{proposition}.\label{prop genre}
 Let $k\,=\,k_0(\sqrt[5]{n})$ such that $n$ = $\mu\lambda^{e_{\lambda}}\pi_{1}^{e_1}....\pi_{f}^{e_f}\pi_{f+1}^{e_{f+1}}.....\pi_{g}^{e_g}$ in $k_0$, where $\mu$ is unity
of $\mathcal{O}_{k_0}$, $\lambda\,=\,1-\zeta_5$ the unique prime above 5 in $k_0$ and each prime $\pi_i \,\equiv\, \pm1,\pm7\, (\mathrm{mod}\,\lambda^5)$ for $1\leq i \leq f$ and $\pi_j \,\not\equiv\, \pm1,\pm7 \, (\mathrm{mod}\,\lambda^5)$ for $f+1\leq j \leq g$. Then we have:
\begin{itemize}
\item[$(i)$] there exists $h_i \in \{1,..,4\}$ such that $\pi_{f+1}\pi_i^{h_i}\,\equiv\, \pm1,\pm7\, (\mathrm{mod}\,\lambda^5)$, for $f+2\leq i \leq g$.
\item[$(ii)$] if $n\,\not\equiv\,\pm1\pm7\, (\mathrm{mod}\,\lambda^5)$ and $q^*\,=\,1$, then the genus field $(k/k_0)^*$ is given as:
\begin{center}
$(k/k_0)^*$ = $k(\sqrt[5]{\pi_1},....\sqrt[5]{\pi_f},\sqrt[5]{\pi_{f+1}\pi_{f+2}^{h_{f+2}}},....\sqrt[5]{\pi_{f+1}\pi_{g}^{h_{g}}})$
\end{center}
where $h_i$ is chosen as in  (i).
\item[$(iii)$] in the other cases of $q^*$ and the congruence of $n$, the genus field  $(k/k_0)^*$ is given by deleting an appropriate number of $5^{th}$ root from the right side of (ii).
\end{itemize}
\end{proposition}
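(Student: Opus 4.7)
The plan is to apply genus theory for the cyclic Kummer extension $k/k_0 = k_0(\sqrt[5]{n})/k_0$. By definition, $(k/k_0)^*$ is the maximal abelian extension of $k_0$ contained in the Hilbert 5-class field $k_5^{(1)}$, and by class field theory this coincides with the compositum of $k$ with all cyclic quintic extensions $k_0(\sqrt[5]{\alpha})/k_0$ that become unramified when lifted to $k$. The task therefore reduces to identifying the correct set of Kummer generators $\alpha$ and verifying both that each produces an unramified extension of $k$ and that no further independent generator exists.

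For part $(i)$, I would work in the finite group $U := (\mathcal{O}_{k_0}/\lambda^5)^\times/((\mathcal{O}_{k_0}/\lambda^5)^\times)^5$. The condition $\beta \equiv \pm1, \pm7 \pmod{\lambda^5}$ is precisely the condition that the image of $\beta$ lies in the trivial 5-power-residue coset. Since, modulo the classes $\pm1, \pm7$, the relevant quotient of $U$ is cyclic of order $5$, and since each $\pi_j$ with $j\geq f+1$ has non-trivial image by assumption, for any $j\geq f+2$ there is a unique exponent $h_j \in \{1,2,3,4\}$ making $\pi_{f+1}\pi_j^{h_j}$ trivial in that quotient; this gives exactly the statement.

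For part $(ii)$, I would invoke Proposition \ref{prKummer} to check unramifiedness prime by prime. For $i \leq f$, the extension $k_0(\sqrt[5]{\pi_i})/k_0$ is ramified only at $\pi_i$ and possibly at $\lambda$; the prime $\pi_i$ is already totally ramified in $k/k_0$, so after base change to $k$ that ramification disappears, while the congruence $\pi_i \equiv \pm1,\pm7 \pmod{\lambda^5}$ guarantees unramifiedness at $\lambda$. The products $\pi_{f+1}\pi_j^{h_j}$ obtained in $(i)$ satisfy the same congruence and ramify only at $\pi_{f+1}$ and $\pi_j$, both already ramified in $k/k_0$. This yields $f + (g-f-1) = g-1$ candidate independent generators. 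To prove maximality, I would compare this count with the rank of $\operatorname{Gal}((k/k_0)^*/k)$ predicted by the Chevalley ambiguous class number formula: the count of ramified primes in $k/k_0$ combined with the unit-norm index is adjusted precisely by $q^*$, and $q^* = 1$ produces exactly the rank matching our list.

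For part $(iii)$, the argument proceeds case by case: if $q^*$ differs from $1$, then an additional (or missing) norm relation among the cyclotomic units $\zeta_5$ and $\zeta_5+1$ changes the unit contribution to the genus formula, forcing deletion of one generator; if $n \equiv \pm1\pm7 \pmod{\lambda^5}$, then $\lambda$ is unramified in $k/k_0$ and its contribution to the ramification count drops, again forcing the removal of a generator. The main obstacle will be this final rank bookkeeping in $(ii)$ and $(iii)$: producing the generators is a routine application of Proposition \ref{prKummer}, but proving that the list is exactly complete requires the sharp form of the ambiguous class number formula together with careful tracking of how $q^*$ and the congruence class of $n$ modulo $\lambda^5$ interact with the unit group and the ramification of $\lambda$.
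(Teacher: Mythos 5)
The paper does not actually prove this proposition: its ``proof'' is a one-line citation to [Kulkarni--Majumdar--Sury, Proposition 5.8], so your attempt is being measured against the argument in that source, which does follow the general strategy you outline (Kummer generators checked for unramifiedness via the decomposition laws, plus the ambiguous class number formula to certify that the list is complete). Your overall plan is therefore the right one, but it contains a concrete error in part $(i)$ and leaves the genuinely hard part of $(ii)$--$(iii)$ unproved.

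The error in $(i)$: you claim that, modulo the classes of $\pm1,\pm7$, the group $U=(\mathcal{O}_{k_0}/\lambda^5)^\times/((\mathcal{O}_{k_0}/\lambda^5)^\times)^5$ becomes cyclic of order $5$, so that any two nontrivial classes lie in a common cyclic subgroup. This is false. Since $v_\lambda(5)=4$, every principal unit $1+\lambda x$ satisfies $(1+\lambda x)^5\equiv 1\ (\mathrm{mod}\ \lambda^5)$, so $(\mathcal{O}_{k_0}/\lambda^5)^\times\cong \mathbb{Z}/4\times(\mathbb{Z}/5)^4$ and the subgroup of fifth powers has order exactly $4$; one checks it is precisely $\{\pm1,\pm7\}$. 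Hence the condition $\pi\equiv\pm1,\pm7\ (\mathrm{mod}\ \lambda^5)$ already means ``trivial in $U$,'' and $U\cong(\mathbb{Z}/5)^4$ is far from cyclic: two arbitrary nontrivial classes need not generate the same line, so the exponent $h_i$ need not exist on the strength of your argument. The existence of $h_i$ is a global fact, not a local one; it uses that the $\pi_j$ are the primes of $k_0$ dividing the rational integer $n$ (so they are permuted by $\mathrm{Gal}(k_0/\mathbb{Q})$, rational primes $q$ land in the fixed cyclic subgroup coming from $(\mathbb{Z}/25)^\times$, and the congruence class of $n$ itself imposes a linear relation among the images). Without this input, part $(i)$ does not follow. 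Separately, in $(ii)$ and $(iii)$ you correctly identify that maximality must come from Chevalley's formula, but you only assert that ``the count is adjusted precisely by $q^*$''; the actual computation of $[E_{k_0}:E_{k_0}\cap N_{k/k_0}(k^\times)]$ in terms of $q^*$, the identification $\mathrm{Gal}((k/k_0)^*/k)\cong C_{k,5}/C_{k,5}^{1-\sigma}$, and the verification that the number of independent generators you produced equals the resulting $5$-rank is the substance of the proof, not a routine afterthought. As it stands the proposal is an accurate roadmap to the argument in the cited reference, but not a proof.
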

\begin{proof}
see [\ref{Mani}, proposition 5.8].
\end{proof}

\begin{center}
\section{Study of capitulation}
\end{center}
This being the case, let $\Gamma$, $k_0$ and $k$ as above. If $C_{k,5}$ is of type $(5,5)$ and the group of ambigous classes $C_{k,5}^{(\sigma)}$ under the action of $Gal(k/k_0)\,=\,\langle\sigma\rangle$ has rank 2, we have  $C_{k,5}\,=\,C_{k,5}^{(\sigma)}$. By class field theory $C_{k,5}^{1-\sigma}$ correspond to $(k/k_0)^*$, and since $C_{k,5}\,=\,C_{k,5}^{(\sigma)}$ we get that $C_{k,5}^{1-\sigma}\,=\,\{1\}$, hence $(k/k_0)^*\,=\,k_5^{(1)}$ is the Hilbert $5$-class field of $k$.\\
When $C_{k,5}$ is of type $(5,5)$, it has $6$ subgroups of order $5$, denoted $H_i$, $1\leq i \leq 6$. Let $K_i$ be the intermediate extension of $k_5^{(1)}/k$, corresponding by class field theory to $H_i$. Its easy to see that $C_{k,5}\,\cong\, C_{k,5}^+\times C_{k,5}^-$ such that $C_{k,5}^{+}\,=\,\{\mathcal{A}\in C_{k,5}\,|\,\mathcal{A}^{\tau^2} = \mathcal{A}\}$ and $C_{k,5}^{-}\,=\,\{\mathcal{X}\in C_{k,5}\,|\,\mathcal{X}^{\tau^2} = \mathcal{X}^{-1}\}$ with $Gal(k/\Gamma)\,=\,\langle\tau\rangle$. As each $K_i$ is cyclic of order $5$ over $k$, there is at least one subgroup of order $5$ of $C_{k,5}$, i.e at least one $H_l$ for some $l \in \{1,2,3,4,5,6\}$, which capitulates in $K_i$ (by Hilbert's theorem $94$).
\begin{definition}
Let $\mathcal{S}_j$ be a generator of $H_j$ $(1\leq j \leq 6)$ corresponding to $K_j$. For $1\leq j \leq 6$, let $i_j \in  \{0,1,2,3,4,5,6\}$. We say that the capitulation is of type $(i_1,i_2,i_3,i_4,i_5,i_6)$ to mean the following:
\item[$(1)$] when $i_j \in \{1,2,3,4,5,6\}$, then only the class $\mathcal{S}_{i_j}$ and its powers capitulate in $K_j$;
\item[$(2)$] when $i_j\,=\,0$, then all the $5$-classes capitulate in $K_j$.
\end{definition}

Throught the paper we order the subgroups $H_i$ of $C_{k,5}$ as follows:\\
$H_1\,=\,C_{k,5}^+\,=\,\langle\mathcal{A}\rangle$, $H_6\,=\,C_{k,5}^-\,=\,\langle\mathcal{X}\rangle$, $H_2\,=\,\langle\mathcal{AX}\rangle$, $H_3\,=\,\langle\mathcal{AX}^2\rangle$, $H_4\,=\,\langle\mathcal{AX}^3\rangle$ and $H_5\,=\,\langle\mathcal{AX}^4\rangle$. By class field theory we have $H_6$ correspond to $K_6\,=\,k\Gamma_5^{(1)}$, with $\Gamma_5^{(1)}$ is the Hilbert $5$-class field of $\Gamma$. By the action of $Gal(k/\mathbb{Q})$ on $C_{k,5}$, we can give the following:

\begin{proposition}\label{tau permut}For all continuations of the automorphisms $\sigma$ and $\tau$ we have:
\item[$(1)$] $K_i^{\sigma}\,=\,K_i$ $(i\,=\,1,2,3,4,5,6)$ i.e $\sigma$ sets all $K_i$

\item[$(2)$] $K_1^{\tau^2}\,=\,K_1$, $K_6^{\tau^2}\,=\,K_6$, $K_2^{\tau^2}\,=\,K_5$ and $K_3^{\tau^2}\,=\,K_4$. i.e $\tau^2$ sets $K_1$, $K_6$ and permutes $K_2$ with $K_5$ and $K_3$ with $K_4$.
\end{proposition}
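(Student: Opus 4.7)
The plan is to translate both assertions into statements about the action of $\sigma$ and $\tau^{2}$ on the subgroups $H_{i}\subset C_{k,5}$, using the Artin isomorphism $\mathrm{Gal}(k_{5}^{(1)}/k)\simeq C_{k,5}$. Since $k_{5}^{(1)}/\mathbb{Q}$ is Galois, any $\rho\in\mathrm{Gal}(k/k_{0})$ or $\mathrm{Gal}(k/\Gamma)$ admits a lift $\tilde{\rho}\in\mathrm{Gal}(k_{5}^{(1)}/\mathbb{Q})$; conjugation of $\mathrm{Gal}(k_{5}^{(1)}/k)$ by $\tilde{\rho}$ corresponds under the Artin map to the natural Galois action of $\rho$ on $C_{k,5}$. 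Different lifts differ by an inner automorphism of the abelian group $\mathrm{Gal}(k_{5}^{(1)}/k)$ and hence act identically, so the equivalence $K_{i}^{\rho}=K_{i}\Longleftrightarrow H_{i}^{\rho}=H_{i}$ is unambiguous.

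For part $(1)$ I would invoke the standing hypothesis of the section, $\mathrm{rank}\,C_{k,5}^{(\sigma)}=2=\mathrm{rank}\,C_{k,5}$. Combined with $|C_{k,5}|=25$, this forces $C_{k,5}^{(\sigma)}=C_{k,5}$, so $\sigma$ acts trivially on every class and in particular stabilises each $H_{i}$. By the equivalence above this gives $K_{i}^{\sigma}=K_{i}$ for $1\le i\le 6$.

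For part $(2)$ I would exploit the $\tau^{2}$-module decomposition $C_{k,5}=C_{k,5}^{+}\oplus C_{k,5}^{-}$ already recorded in the text, in which $\mathcal{A}^{\tau^{2}}=\mathcal{A}$ and $\mathcal{X}^{\tau^{2}}=\mathcal{X}^{-1}$. Since $\langle\mathcal{X}\rangle=\langle\mathcal{X}^{-1}\rangle$, both $H_{1}=\langle\mathcal{A}\rangle$ and $H_{6}=\langle\mathcal{X}\rangle$ are $\tau^{2}$-stable, yielding $K_{1}^{\tau^{2}}=K_{1}$ and $K_{6}^{\tau^{2}}=K_{6}$. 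A direct computation then gives
\[
(\mathcal{A}\mathcal{X}^{j})^{\tau^{2}}=\mathcal{A}\mathcal{X}^{-j}=\mathcal{A}\mathcal{X}^{5-j}\qquad(j=1,2,3,4),
\]
so $H_{2}^{\tau^{2}}=\langle\mathcal{A}\mathcal{X}^{4}\rangle=H_{5}$ and $H_{3}^{\tau^{2}}=\langle\mathcal{A}\mathcal{X}^{3}\rangle=H_{4}$. Translating back through the Artin correspondence delivers the asserted permutations $K_{2}^{\tau^{2}}=K_{5}$ and $K_{3}^{\tau^{2}}=K_{4}$.

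Thus the proposition reduces to a bookkeeping exercise once one has in hand (a) the ambiguity hypothesis $C_{k,5}=C_{k,5}^{(\sigma)}$ and (b) the eigenspace decomposition of $C_{k,5}$ under the involution $\tau^{2}$. The only point requiring care is the well-definedness of the conjugation action of an arbitrary lift $\tilde{\rho}$ on $\mathrm{Gal}(k_{5}^{(1)}/k)$, which is standard functoriality of class field theory together with the abelianness of $C_{k,5}$; no deeper obstacle appears.
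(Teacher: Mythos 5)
Your proposal is correct and follows essentially the same route as the paper: part $(1)$ from the ambiguity hypothesis $C_{k,5}=C_{k,5}^{(\sigma)}$, and part $(2)$ from the computation $(\mathcal{A}\mathcal{X}^{j})^{\tau^{2}}=\mathcal{A}\mathcal{X}^{-j}$ on the eigenspace decomposition, translated to the fields $K_i$ via class field theory. The only difference is that you make explicit the well-definedness of the lifted conjugation action, which the paper leaves implicit.
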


\begin{proof}
We will agree that for all $1\leq i \leq 6$, and for all $w\in Gal(k/\mathbb{Q})$ we have $H_i^w\,=\,\{\mathcal{C}^w\,|\, \mathcal{C}\in H_i\}$. 
\item[$(1)$] Since all classes are ambigous because $C_{k,5}\,=\,C_{k,5}^{(\sigma)}$, then $\sigma$ sets all $H_i$.
\item[$(2)$]We have $H_1\,=\,C_{k,5}^+\,=\,\langle\mathcal{A}\rangle$ and $H_6\,=\,C_{k,5}^-\,=\,\langle\mathcal{X}\rangle$, then $H_1^{\tau^2}\,=\,H_1$ and $H_6^{\tau^2}\,=\,H_6$.\\
- Since $(\mathcal{AX})^{\tau^2}\,=\,\mathcal{A}^{\tau^2}\mathcal{X}^{\tau^2}\,=\,\mathcal{AX}^{-1}\,=\,\mathcal{AX}^4 \in H_5$ then $H_2^{\tau^2}\,=\,H_5$.\\
- Since $(\mathcal{A}\mathcal{X}^2)^{\tau^2}\,=\,\mathcal{A}^{\tau^2}(\mathcal{X}^2)^{\tau^2}\,=\,\mathcal{AX}^{-2}\,=\,\mathcal{AX}^3 \in H_4$ then $H_3^{\tau^2}\,=\,H_4$.\\
- Since $\tau^4\,=\,1$ we get that $H_5^{\tau^2}\,=\,H_2$ and $H_4^{\tau^2}\,=\,H_3$.\\
The relations between the fields $K_i$ in $(1)$ and $(2)$ are nothing else than the translations of the corresponding relations for the subgroups $H_i$ via class field theory.
\end{proof}
To study the capitulation problem of $k$ whenever $C_{k,5}$ is of type $(5,5)$ and $C_{k,5}\,=\,C_{k,5}^{(\sigma)}$, we will investigate the three forms of the radicand $n$ proved in [\ref{FOU1}, theorem 1.1] and mentioned above.

\subsection{The case $n\,=\,p^e\,\equiv\,\pm1\pm7\,(\mathrm{mod}\,25)$, where $p\,\equiv\,1\,(\mathrm{mod}\,25)$}
Let $k\,=\,\Gamma(\zeta_5)$ be the normal closure of $\Gamma\,=\,\mathbb{Q}(\sqrt[5]{n})$, where $n\,=\,p^e$ such that $p\,\equiv\,1\,(\mathrm{mod}\,25)$ and $e\in \{1,2,3,4\}$. Since $p\,\equiv\,1\,(\mathrm{mod}\,5)$ we have that $p$ splits completely in $k_0\,=\,\mathbb{Q}(\zeta_5)$ as $p\,=\,\pi_1\pi_2\pi_3\pi_4$, with $\pi_i$ are primes in $k_0$ such that $\pi_i\,\equiv\,1\,(\mathrm{mod}\,5\mathcal{O}_{k_0})$, then the primes of $k_0$ ramified in $k$ are $\pi_i$.\\
If $\mathcal{P}_1, \mathcal{P}_2, \mathcal{P}_3$ and $\mathcal{P}_4$ are respectivly the prime ideals of $k$ above $\pi_1, \pi_2, \pi_3$ and $\pi_4$, then $\mathcal{P}_i^5\,=\,\pi_i\mathcal{O}_k\, (i=1,2,3,4)$  and since $\tau$ acte transitively on $\pi_i$, we have that $\tau^2$ permutes $\pi_1$ with $\pi_3$, hence $\tau^2$ permutes $\mathcal{P}_1$ with $\mathcal{P}_3$. Since $\pi_i^{\sigma}\,=\,\pi_i$, we have $\mathcal{P}_i^{\sigma}\,=\,\mathcal{P}_i$. In fact $[\mathcal{P}_i]\, (i=1,2,3,4)$ generate the group of strong ambigous ideal classes denoted $C_{k,s}^{(\sigma)}$. The next theorem allow us to determine explicitly the intermediate extensions of $k_5^{(1)}/k$.
\begin{theorem}\label{6 exten case 1}
Let $k$ and $n$ as above. Let $\pi_1, \pi_2, \pi_3$ and $\pi_4$ a primes of $k_0$ congrus to $1$ modulo $\lambda^5$ such that $p\,=\,\pi_1\pi_2\pi_3\pi_4$, then:
\item[$(1)$] $k_5^{(1)}\,=\, k(\sqrt[5]{\pi_1}, \sqrt[5]{\pi_3})$.
\item[$(2)$] The six intermediate extensions of $k_5^{(1)}/k$ are:  $k(\sqrt[5]{\pi_1})$, $k(\sqrt[5]{\pi_3})$, $k(\sqrt[5]{\pi_1\pi_3})$, $k(\sqrt[5]{\pi_1\pi_3^2})$, $k(\sqrt[5]{\pi_1\pi_3^3})$ and $k(\sqrt[5]{\pi_1\pi_3^4})$. Furthermore $\tau^2$ permutes $k(\sqrt[5]{\pi_1})$ with $k(\sqrt[5]{\pi_3})$ and $k(\sqrt[5]{\pi_1\pi_3^2})$ with $k(\sqrt[5]{\pi_1\pi_3^3})$, and sets $k(\sqrt[5]{\pi_1\pi_3})$, $k(\sqrt[5]{\pi_1\pi_3^4})$.
\end{theorem}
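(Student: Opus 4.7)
The plan is to identify $k_5^{(1)}$ with the relative genus field $(k/k_0)^*$ and then compute the latter via Proposition~\ref{prop genre}. Since $C_{k,5}=C_{k,5}^{(\sigma)}$ forces $C_{k,5}^{1-\sigma}=\{1\}$, class field theory gives $(k/k_0)^*=k_5^{(1)}$, as already observed in the preamble of Section~3. With $n=p^e$ and $p=\pi_1\pi_2\pi_3\pi_4$ the complete decomposition in $k_0$ (from $p\equiv 1\pmod 5$), the hypothesis $p\equiv 1\pmod{25}$ gives $\pi_i\equiv 1\pmod{\lambda^5}$ for each $i$. In the notation of Proposition~\ref{prop genre} we thus have $f=g=4$, no unit or $\lambda$-part, and $n\equiv\pm1\pm7\pmod{\lambda^5}$, placing us in case~(iii): some radicals must be deleted from $\sqrt[5]{\pi_1},\ldots,\sqrt[5]{\pi_4}$. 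Since $\sqrt[5]{p}\in k$ removes one degree of freedom and $(k/k_0)^*$ has degree $|C_{k,5}|=25$ over $k$, exactly two independent radicals survive, and picking one from each $\tau^2$-orbit $\{\pi_1,\pi_3\}$ and $\{\pi_2,\pi_4\}$ produces the natural candidate $F:=k(\sqrt[5]{\pi_1},\sqrt[5]{\pi_3})$.

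To turn this into a proof I would verify directly that $F/k$ is unramified of degree $25$, so that $F\subseteq k_5^{(1)}$ and the equality $F=k_5^{(1)}$ follows from matching degrees. The degree claim reduces to the $\mathbb{F}_5$-independence of $[\pi_1]$ and $[\pi_3]$ in $k^*/k^{*5}$, itself a consequence of the independence of the ideal classes $[\mathcal{P}_1],[\mathcal{P}_3]$ in the rank-$2$ group $C_{k,5}$. For unramifiedness one applies Proposition~\ref{prKummer} prime by prime: outside $5p$ it is automatic; at any $\mathcal{P}_j$ above $\pi_j$ with $j\in\{2,4\}$, the valuations $v_{\mathcal{P}_j}(\pi_1)=v_{\mathcal{P}_j}(\pi_3)=0$ place us in the first two bullets of part~(1), both unramified; at $\mathcal{P}_1,\mathcal{P}_3$ the radicand has valuation exactly $5$, so absorbing the $5^{th}$ power returns us to the unit case, again unramified; and at the prime above $\lambda$ the congruence $\pi_i\equiv 1\pmod{\lambda^5}$ combined with $(1+\lambda y)^5\equiv 1+(\mathrm{unit})\cdot\lambda^5 y\pmod{\lambda^6}$ solves $\pi_i\equiv X^5\pmod{\lambda^6}$, yielding complete splitting by part~(2).

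For (2), the six order-$5$ subgroups of $Gal(F/k)\cong(\mathbb{Z}/5\mathbb{Z})^2$ correspond via Kummer duality to the six $\mathbb{F}_5$-lines in the span of $[\pi_1]$ and $[\pi_3]$ in $k^*/k^{*5}$, hence to the fixed fields $k(\sqrt[5]{\pi_1^a\pi_3^b})$ with $(a:b)\in\mathbb{P}^1(\mathbb{F}_5)$; these are precisely the six fields listed in the theorem. The $\tau^2$-action follows from $\tau^2(\pi_1)=\pi_3$ and $\tau^2(\pi_3)=\pi_1$, which sends $k(\sqrt[5]{\pi_1^a\pi_3^b})$ to $k(\sqrt[5]{\pi_1^b\pi_3^a})$; this fixes the projective classes $(1:1)$ and $(1:-1)=(1:4)$, swaps $(1:0)\leftrightarrow(0:1)$, and swaps $(1:2)\leftrightarrow(1:3)$, which is exactly the permutation claimed.

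The main obstacle is the unramifiedness verification at $\mathcal{P}_1$ and $\mathcal{P}_3$, where the radicand has positive $5$-adic valuation: one must carefully absorb the factor $\mathcal{P}_i^5$ into a local uniformizer and check that the residual unit behaves as a $5^{th}$-power residue. This step ultimately draws on the special conjugacy structure of the four prime factors of $p$ under $Gal(k_0/\mathbb{Q})$ together with the hypothesis $p\equiv 1\pmod{25}$, which is exactly what separates this case from the other two forms of $n$ treated later in the paper.
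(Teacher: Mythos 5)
Your overall strategy coincides with the paper's: both identify $k_5^{(1)}$ with $(k/k_0)^*$ via $C_{k,5}=C_{k,5}^{(\sigma)}\Rightarrow C_{k,5}^{1-\sigma}=\{1\}$, both invoke Proposition~\ref{prop genre} for part $(1)$, and for part $(2)$ both list the six subfields $k(\sqrt[5]{\pi_1^a\pi_3^b})$ and read off the $\tau^2$-action from $\tau^2(\pi_1)=\pi_3$ (your projective-line bookkeeping $(a:b)\mapsto(b:a)$ is a cleaner packaging of the paper's element-by-element computation, and it is correct). The paper's proof of $(1)$ is essentially a bare citation of [\ref{Mani}, Prop.\ 5.8]; you rightly notice that case $(iii)$ of that proposition only says ``delete an appropriate number of radicals'' and does not by itself single out $k(\sqrt[5]{\pi_1},\sqrt[5]{\pi_3})$, so you add a verification layer (degree count plus prime-by-prime non-ramification via Proposition~\ref{prKummer}). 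That extra layer is where the only real problem lies.

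The gap is in your degree argument. You reduce $[k(\sqrt[5]{\pi_1},\sqrt[5]{\pi_3}):k]=25$ to the $\mathbb{F}_5$-independence of $\pi_1,\pi_3$ in $k^*/k^{*5}$, and you derive that from ``the independence of the ideal classes $[\mathcal{P}_1],[\mathcal{P}_3]$ in the rank-$2$ group $C_{k,5}$.'' But that hypothesis is not available and in fact fails in one of the two subcases the paper later distinguishes: in the proof of Theorem~\ref{generator case 1}, when $C_{k,s}^{(\sigma)}\neq C_{k,5}^{(\sigma)}$ the group $C_{k,s}^{(\sigma)}=\langle[\mathcal{P}_1]\rangle$ is cyclic of order $5$ and contains $[\mathcal{P}_3]=[\mathcal{P}_1^{\tau^2}]$, so $[\mathcal{P}_1]$ and $[\mathcal{P}_3]$ are dependent. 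In that situation $\mathcal{P}_1^{a}\mathcal{P}_3^{b}$ is principal for some $(a,b)\neq(0,0)$, say equal to $(\gamma)$, whence $\pi_1^{a}\pi_3^{b}=\varepsilon\gamma^5$ for a unit $\varepsilon$, and the independence of $\pi_1,\pi_3$ modulo fifth powers then hinges on $\varepsilon\notin k^{*5}$ --- a unit-group statement your argument does not address. So the step that was supposed to make the identification of the genus field self-contained is exactly the step that is missing; as written you still need the external input of [\ref{Mani}]. Two smaller slips: $\pi_1$ and $\pi_3$ lie in the \emph{same} $\tau^2$-orbit $\{\pi_1,\pi_3\}$ (not one from each of $\{\pi_1,\pi_3\}$ and $\{\pi_2,\pi_4\}$ --- and being in the same orbit is precisely what makes your part $(2)$ computation work); and at $\lambda$ the congruence $\pi_i\equiv1\pmod{\lambda^5}$ only gives solvability of $(**)$, hence non-ramification, not the complete splitting you claim --- which is all you need anyway.
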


\begin{proof}
\item[$(1)$] We have that $k_5^{(1)}\,=\,(k/k_0)^*$. Since $k\,=\,k_0(\sqrt[5]{n})$ with $n\,=\,p\,=\pi_1\pi_2\pi_3\pi_4$ in $k_0$ and $\pi_i\,\equiv\,1\,(\mathrm{mod}\,\lambda^5)\, (i\,=\,1,2,3,4)$, then by proposition \ref{prop genre} we have $(k/k_0)^*\,=\,k(\sqrt[5]{\pi_1}, \sqrt[5]{\pi_3})$.

\item[$(2)$] If $k_5^{(1)}\,=\,k(\sqrt[5]{\pi_1}, \sqrt[5]{\pi_3})$, then the six intermediate extensions are: $k(\sqrt[5]{\pi_1})$, $k(\sqrt[5]{\pi_3})$, $k(\sqrt[5]{\pi_1\pi_3})$, $k(\sqrt[5]{\pi_1\pi_3^2})$, $k(\sqrt[5]{\pi_1\pi_3^3})$ and $k(\sqrt[5]{\pi_1\pi_3^4})$. We have $\tau^2(\pi_1)\,=\,\pi_3$ then its easy to see that $\tau^2$ sets the fields $k(\sqrt[5]{\pi_1\pi_3})$, $k(\sqrt[5]{\pi_1\pi_3^4})$. Since $\tau^2(\pi_1)\,=\,\tau^2(\sqrt[5]{\pi_1^5})\,=\,(\tau^2(\sqrt[5]{\pi_1}))^5\,=\,\pi_3$, then $\tau^2(\sqrt[5]{\pi_1})$ is $5^{th}$ root of $\pi_3$. Hence $k(\sqrt[5]{\pi_3})\,=\,k(\tau^2(\sqrt[5]{\pi_1}))$ i.e $k(\sqrt[5]{\pi_3})\,=\,k(\sqrt[5]{\pi_1})^{\tau^2}$. By the same resoning we prove that $k(\sqrt[5]{\pi_1})\,=\,k(\sqrt[5]{\pi_3})^{\tau^2}$. Hence $\tau^2$ permutes $k(\sqrt[5]{\pi_1})$ with $k(\sqrt[5]{\pi_3})$.\\
We have $\tau^2(\pi_1\pi_3^2)\,=\,\pi_1^2\pi_3$ then  $\tau^2(\pi_1\pi_3^2)\,=\,\tau^2(\sqrt[5]{(\pi_1\pi_3^2)^5})\,=\,(\tau^2(\sqrt[5]{\pi_1\pi_3^2}))^5\,=\,\pi_1^2\pi_3$, hence $\tau^2(\sqrt[5]{\pi_1\pi_3^2})$ is $5^{th}$ root of $\pi_1^2\pi_3$. Then $k(\sqrt[5]{\pi_1^2\pi_3})\,=\,k(\tau^2(\sqrt[5]{\pi_1\pi_3^2}))$ i.e $k(\sqrt[5]{\pi_1^2\pi_3})\,=\,k(\sqrt[5]{\pi_1\pi_3^3})\,=\,k(\sqrt[5]{\pi_1\pi_3^2})^{\tau^2}$. By the same resoning we prove that $k(\sqrt[5]{\pi_1\pi_3^2})\,=\,k(\sqrt[5]{\pi_1\pi_3^3})^{\tau^2}$. Hence $\tau^2$ permutes $k(\sqrt[5]{\pi_1\pi_3^2})$ with $k(\sqrt[5]{\pi_1\pi_3^3})$.
\end{proof}

The generators of $C_{k,5}$ when its of type $(5,5)$ and the radicand $n$ is as above are determined as follows:

\begin{theorem}\label{generator case 1} Let $k$ and $n$ as above. Let $\pi_1, \pi_2, \pi_3$ and $\pi_4$ a primes of $k_0$ congrus to 1 $(\mathrm{mod}\, \lambda^5)$ such that $n\,=\,p\,=\,\pi_1\pi_2\pi_3\pi_4$. Let $\mathcal{P}_1, \mathcal{P}_2, \mathcal{P}_3$ and $\mathcal{P}_4$ prime ideals of $k$ such that $\mathcal{P}_i^5\,=\,\pi_i\mathcal{O}_{k_0}\, (i=1,2,3,4)$. Then:
\begin{center}
$C_{k,5}\,=\,\langle [\mathcal{P}_1\mathcal{P}_3], [\mathcal{P}_1\mathcal{P}_3^4]\rangle$
\end{center}
\end{theorem}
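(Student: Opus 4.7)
My plan is to split the proof into two stages: first identify which $\tau^2$-eigenspace each candidate class lives in (thereby reducing the theorem to a nonvanishing statement), and then settle the nonvanishing by a Kummer-theoretic analysis built on Theorem~\ref{6 exten case 1} and Proposition~\ref{prKummer}.

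\textbf{Eigenspace decomposition.} Since $\pi_i\mathcal{O}_k=\mathcal{P}_i^5$ is principal, each $[\mathcal{P}_i]$ has order dividing $5$; in particular $[\mathcal{P}_i^4]=[\mathcal{P}_i^{-1}]$. The discussion preceding Theorem~\ref{6 exten case 1} records that $\tau^2$ swaps $\mathcal{P}_1$ with $\mathcal{P}_3$. I therefore compute $\tau^2[\mathcal{P}_1\mathcal{P}_3]=[\mathcal{P}_3\mathcal{P}_1]=[\mathcal{P}_1\mathcal{P}_3]$, placing $[\mathcal{P}_1\mathcal{P}_3]\in C_{k,5}^{+}$, and $\tau^2[\mathcal{P}_1\mathcal{P}_3^4]=[\mathcal{P}_3\mathcal{P}_1^{-1}]=[\mathcal{P}_1\mathcal{P}_3^4]^{-1}$, placing $[\mathcal{P}_1\mathcal{P}_3^4]\in C_{k,5}^{-}$. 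Under the standing hypothesis $C_{k,5}\cong(5,5)$ with $C_{k,5}^{+}=\langle\mathcal{A}\rangle$ and $C_{k,5}^{-}=\langle\mathcal{X}\rangle$ both cyclic of order $5$, the direct-sum decomposition $C_{k,5}=C_{k,5}^{+}\oplus C_{k,5}^{-}$ shows that the two candidate classes generate $C_{k,5}$ precisely when neither is trivial.

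\textbf{Nonvanishing.} Suppose for contradiction that $\mathcal{P}_1\mathcal{P}_3=(\alpha)$ for some $\alpha\in k$. Then $\alpha^5=u\pi_1\pi_3$ for a unit $u\in\mathcal{O}_k^{\times}$, and hence $k(\sqrt[5]{\pi_1\pi_3})=k(\sqrt[5]{u})$. By Theorem~\ref{6 exten case 1} the left-hand side is an unramified degree-$5$ extension of $k$ inside $k_5^{(1)}$; in particular it is unramified at the unique prime $\mathfrak{L}$ of $k$ above $\lambda=1-\zeta_5$. Proposition~\ref{prKummer}(2) then imposes on $u$ the Kummer congruence $(*)$ modulo a high power of $\mathfrak{L}$. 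Since $\mathcal{O}_k^{\times}/(\mathcal{O}_k^{\times})^5$ is generated by $\zeta_5$, a fundamental unit of $k_0$ lifted to $k$, and the radical element $\sqrt[5]{p}\in k$, and since the hypothesis $p\equiv 1\pmod{25}$ forces $\pi_i\equiv 1\pmod{\lambda^5}$, a finite enumeration of these generators against $(*)$ together with the constraint $k(\sqrt[5]{u})=k(\sqrt[5]{\pi_1\pi_3})$ excludes every candidate $u$. The symmetric argument with $\pi_1\pi_3$ replaced by $\pi_1\pi_3^4$ then gives $[\mathcal{P}_1\mathcal{P}_3^4]\neq 1$, and the theorem follows.

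\textbf{Main obstacle.} The delicate point is the nonvanishing step: producing explicit representatives of $\mathcal{O}_k^{\times}/(\mathcal{O}_k^{\times})^5$ and checking each, congruence by congruence, against both the Kummer constraint of Proposition~\ref{prKummer}(2) at $\mathfrak{L}$ and the Kummer class of the target radicand $\pi_1\pi_3^a$ for $a\in\{1,4\}$. This mixes genus-theoretic control of the unit-norm index $[E_{k_0}:E_{k_0}\cap N_{k/k_0}(k^{\times})]$ with the explicit congruence bookkeeping already developed in~[\ref{FOU1}].
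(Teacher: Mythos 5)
Your first stage coincides with the paper's: the computations $[\mathcal{P}_1\mathcal{P}_3]^{\tau^2}=[\mathcal{P}_1\mathcal{P}_3]$ and $[\mathcal{P}_1\mathcal{P}_3^4]^{\tau^2}=[\mathcal{P}_1\mathcal{P}_3^4]^{-1}$, together with $C_{k,5}\cong C_{k,5}^+\times C_{k,5}^-$ with both factors of order $5$, correctly reduce the theorem to the nontriviality of the two classes. The divergence, and the gap, is entirely in your nonvanishing step. First, the concrete errors: $\sqrt[5]{p}$ is not a unit of $\mathcal{O}_k$ (its norm is a power of $p$), and $k$ is a totally complex field of degree $20$, so $\mathcal{O}_k^{\times}$ has free rank $9$ and $\mathcal{O}_k^{\times}/(\mathcal{O}_k^{\times})^5$ has order $5^{10}$; it is nowhere near generated by $\zeta_5$ and a lifted fundamental unit of $k_0$, so the promised ``finite enumeration'' ranges over a group you have not described and cannot list. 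Second, and more fundamentally, the strategy is circular. By Kummer theory an everywhere-unramified cyclic quintic extension $k(\sqrt[5]{w})$ has $(w)=\mathfrak{a}^5$ for some ideal $\mathfrak{a}$, and it can be rewritten as $k(\sqrt[5]{u})$ with $u$ a unit exactly when $\mathfrak{a}$ is principal; thus ``no unit $u$ satisfies $k(\sqrt[5]{u})=k(\sqrt[5]{\pi_1\pi_3})$'' is a restatement of ``$\mathcal{P}_1\mathcal{P}_3$ is not principal,'' not a route to proving it. The congruence $(*)$ of Proposition \ref{prKummer}(2) only tests the local behaviour at $\lambda$; every unit that is a local fifth power there passes it, and it carries no information about which global unramified extension $k(\sqrt[5]{u})$ is. No amount of congruence bookkeeping at $\lambda$ can therefore exclude all candidates.

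The paper obtains nonvanishing from a different, genuinely global source: genus theory. By [\ref{FOU1}, Theorem 1.1] and [\ref{Mani}, Section 5.3] the strongly ambiguous subgroup $C_{k,s}^{(\sigma)}=\langle[\mathcal{P}_1],[\mathcal{P}_2],[\mathcal{P}_3],[\mathcal{P}_4]\rangle$ is nontrivial (cyclic of order $5$, or all of $C_{k,5}$, according to whether $\zeta_5$ is a norm of a unit), and the relations $\mathcal{P}_1^{\tau^2}=\mathcal{P}_3$, $\mathcal{P}_2^{\tau^2}=\mathcal{P}_4$ then force $[\mathcal{P}_1]$, hence $[\mathcal{P}_3]$, to be nontrivial, after which the two eigenclasses are assembled exactly as in your first stage. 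If you want to salvage your write-up, keep the eigenspace reduction but replace the unit enumeration by this input: the ambiguous class number formula (equivalently, the cited determination of $C_{k,s}^{(\sigma)}$) is the fact that actually rules out principality.
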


\begin{proof}
According to [\ref{FOU1}, theorem 1.1], for that case of the radicand $n$, we have that $\zeta_5^i(1+\zeta_5)^j$ is norm of element in $k-\{0\}$. By [\ref{Mani}, section 5.3], if $\zeta_5$ is not norm of unit of $k$ we have $C_{k,5}\,=\,C_{k,5}^{(\sigma)}\,\neq\,C_{k,s}^{(\sigma)}$, so $C_{k,s}^{(\sigma)}$ contained in $C_{k,5}^{(\sigma)}$. Hence we discuss two cases:
\item[-] $1^{th}$ case: $C_{k,5}\,=\,C_{k,5}^{(\sigma)}\,\neq\,C_{k,s}^{(\sigma)}$: In this case, $C_{k,s}^{(\sigma)}$ is contained in $C_{k,5}\,=\,C_{k,5}^{(\sigma)}$, and by [\ref{Mani}, section 5.3] we have $C_{k,5}^{(\sigma)}/C_{k,s}^{(\sigma)}\,=\,C_{k,5}/C_{k,s}^{(\sigma)}$ is cyclic group of order $5$. Since $C_{k,5}$ has order $25$ then $C_{k,s}^{(\sigma)}$ is cyclic of order $5$. We have that $C_{k,s}^{(\sigma)}\,=\,\langle [\mathcal{P}_1], [\mathcal{P}_2], [\mathcal{P}_3], [\mathcal{P}_4] \rangle$, $\mathcal{P}_1^{\tau^2}\,=\,\mathcal{P}_3$ and $\mathcal{P}_2^{\tau^2}\,=\,\mathcal{P}_4$, so $\mathcal{P}_1$ and $\mathcal{P}_2$ can not be both principals in $k$, otherwise $\mathcal{P}_3\,=\,\mathcal{P}_1^{\tau^2}$ and $\mathcal{P}_4\,=\,\mathcal{P}_2^{\tau^2}$ will be principals too, hence $C_{k,s}^{(\sigma)}\,=\,\{1\}$, which is impossible. by the same reasoning we have that $\mathcal{P}_3$ and $\mathcal{P}_4$ can not be both principals in $k$. Since $C_{k,s}^{(\sigma)}$ is cyclic of order $5$  and without loosing generality we get that $C_{k,s}^{(\sigma)}\,=\,\langle [\mathcal{P}_1]\rangle$, so $\mathcal{P}_1$ and $\mathcal{P}_3\,=\,\mathcal{P}_1^{\tau^2}$ are not principals. Since $C_{k,5} \cong C_{k,5}^+\times C_{k,5}^-$ its sufficient to find generators of $C_{k,5}^+$ and $C_{k,5}^-$. As $[\mathcal{P}_1\mathcal{P}_3]^{\tau^2}\,=\,[(\mathcal{P}_1\mathcal{P}_3)^{\tau^2}]\,=\,[\mathcal{P}_1\mathcal{P}_3]$ then $C_{k,5}^+\,=\,\langle[\mathcal{P}_1\mathcal{P}_3]\rangle$, and $[\mathcal{P}_1\mathcal{P}_3^4]^{\tau^2}\,=\,[(\mathcal{P}_1\mathcal{P}_3^4)^{\tau^2}]\,=\,[\mathcal{P}_1^4\mathcal{P}_3]\,=\,[\mathcal{P}_1\mathcal{P}_3^4]^{-1}$ then $C_{k,5}^-\,=\,\langle[\mathcal{P}_1\mathcal{P}_3^4]\rangle$. Hence $C_{k,5}\,=\,\langle [\mathcal{P}_1\mathcal{P}_3], [\mathcal{P}_1\mathcal{P}_3^4]\rangle$.

\item[-] $2^{th}$ case: $C_{k,5}\,=\,C_{k,5}^{(\sigma)}\,=\,C_{k,s}^{(\sigma)}$: We admit the same reasoning of $1^{th}$ case because none of $\mathcal{P}_i\, (i=1,2,3,4)$ is principal, otherwise $C_{k,5}\,=\,C_{k,s}^{(\sigma)}\,=\,\{1\}$, which is impossible. Hence $C_{k,5}\,=\,\langle [\mathcal{P}_1\mathcal{P}_3], [\mathcal{P}_1\mathcal{P}_3^4]\rangle$.
\end{proof}
Now we are able to stat the main theorem of capitulation in this case.
\begin{theorem}\label{theo capi case 1}
We keep the same assumptions as theorem \ref{generator case 1} Then:
\item[$(1)$] If $(\frac{\pi_1}{\pi_3})_5\,=\,1$ then $K_1\,=\,k(\sqrt[5]{\pi_1\pi_3})$ or $k(\sqrt[5]{\pi_1\pi_3^4})$, $K_2\,=\,k(\sqrt[5]{\pi_3})$, $K_3\,=\,k(\sqrt[5]{\pi_1\pi_3^2})$ or $k(\sqrt[5]{\pi_1\pi_3^3})$, $K_4\,=\,k(\sqrt[5]{\pi_1\pi_3^3})$ or $k(\sqrt[5]{\pi_1\pi_3^2})$, $K_5\,=\,k(\sqrt[5]{\pi_1})$ and $K_6\,=\,k(\sqrt[5]{\pi_1\pi_3^4})$ or $k(\sqrt[5]{\pi_1\pi_3})$. Otherwise we just permute $K_2$ and $K_5$.

\item[$(2)$] $[\mathcal{P}_1\mathcal{P}_3]$ capitulates in $k(\sqrt[5]{\pi_1\pi_3})$, $[\mathcal{P}_i]$ capitulates in $k(\sqrt[5]{\pi_i})\, (i= 1, 3)$, $[\mathcal{P}_1\mathcal{P}_3^2]$ capitulates in $k(\sqrt[5]{\pi_1\pi_3^2})$, $[\mathcal{P}_1\mathcal{P}_3^3]$ capitulates in $k(\sqrt[5]{\pi_1\pi_3^3})$ and $[\mathcal{P}_1\mathcal{P}_3^4]$ capitulates in $k(\sqrt[5]{\pi_1\pi_3^4})$.

\item[$(3)$] $(i)$  If $(\frac{\pi_1}{\pi_3})_5\,=\,1$ and $K_6\,=\,k(\sqrt[5]{\pi_1\pi_3^4})$ then the possible types of capitulation are: $(0,0,0,0,0,0)$, $(1,0,0,0,0,0)$, 
$(0,2,0,0,5,0)$, 
$(1,2,0,0,5,0)$,  
$\{(0,0,3,4,0,0)$ or $(0,0,4,3,0,0)\}$, 
$\{(1,0,3,4,0,0)$ or $(1,0,4,3,0,0)\}$, 
$\{(0,2,3,4,5,0)$ or $(0,2,4,3,5,0)\}$, 
$\{(1,2,3,4,5,0)$ or $(1,2,4,3,5,0)\}$.\\

$(ii)$ If $(\frac{\pi_1}{\pi_3})_5\,=\,1$ and $K_6\,=\,k(\sqrt[5]{\pi_1\pi_3})$ then the same possible types of capitulation accur as in $(i)$ with $i_6 = 0$ or $1$ and $i_1 = 0$ or $6$\\

- $(iii)$ If $(\frac{\pi_1}{\pi_3})_5\,\neq\,1$ then the same possible types of capitulation accur as $(i)$ and $(ii)$ by permuting 2 and 5.
\end{theorem}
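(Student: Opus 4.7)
I would tackle the three parts in the order (2), (1), (3), since the explicit capitulation relations in (2) drive the identifications of (1) and feed directly into the enumeration of (3). Throughout I use the generators from Theorem~\ref{generator case 1}, namely $\mathcal{A}=[\mathcal{P}_1\mathcal{P}_3]$ and $\mathcal{X}=[\mathcal{P}_1\mathcal{P}_3^4]$, and the immediate computation (using $\mathcal{P}_i^5=\pi_i\mathcal{O}_k$) of the six cyclic subgroups: $H_1=\langle[\mathcal{P}_1\mathcal{P}_3]\rangle$, $H_2=\langle[\mathcal{P}_1]\rangle$, $H_3=\langle[\mathcal{P}_1\mathcal{P}_3^3]\rangle$, $H_4=\langle[\mathcal{P}_1\mathcal{P}_3^2]\rangle$, $H_5=\langle[\mathcal{P}_3]\rangle$, $H_6=\langle[\mathcal{P}_1\mathcal{P}_3^4]\rangle$.

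\textbf{Part (2) --- explicit capitulations.} This is a direct ideal computation. For each pair $(a,b)\in\{0,\dots,4\}^2\setminus\{(0,0)\}$ appearing in the statement, set $\alpha=\pi_1^a\pi_3^b$ and $L=k(\sqrt[5]{\alpha})$. Because $\mathcal{P}_i^5=\pi_i\mathcal{O}_k$, one has the identity of ideals in the Dedekind ring $\mathcal{O}_L$:
\[\bigl(\mathcal{P}_1^a\mathcal{P}_3^b\bigr)^5\mathcal{O}_L \;=\; \alpha\mathcal{O}_L \;=\; \bigl(\sqrt[5]{\alpha}\bigr)^5.\]
Unique factorization of ideals allows the extraction of a fifth root, giving $\mathcal{P}_1^a\mathcal{P}_3^b\mathcal{O}_L=\bigl(\sqrt[5]{\alpha}\bigr)$, which is principal in $L$. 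Each of the six capitulation assertions in (2) is this identity for the appropriate $(a,b)$.

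\textbf{Part (1) --- identification.} Proposition~\ref{tau permut} tells me $\tau^2$ fixes $K_1,K_6$ and swaps $K_2\leftrightarrow K_5$, $K_3\leftrightarrow K_4$, while Theorem~\ref{6 exten case 1} gives the matching $\tau^2$-behaviour for the six Kummer fields. Pairing these produces
\[\{K_1,K_6\}\leftrightarrow\{k(\sqrt[5]{\pi_1\pi_3}),\,k(\sqrt[5]{\pi_1\pi_3^4})\},\qquad \{K_2,K_5\}\leftrightarrow\{k(\sqrt[5]{\pi_1}),\,k(\sqrt[5]{\pi_3})\},\]
and similarly $\{K_3,K_4\}\leftrightarrow\{k(\sqrt[5]{\pi_1\pi_3^2}),\,k(\sqrt[5]{\pi_1\pi_3^3})\}$. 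To pin $K_5=k(\sqrt[5]{\pi_1})$ when $(\pi_1/\pi_3)_5=1$, I would compute the splitting of $\mathcal{P}_3$ in $k(\sqrt[5]{\pi_1})$: since $v_{\mathcal{P}_3}(\pi_1)=0$, Proposition~\ref{prKummer}(1) gives complete splitting iff $\pi_1\equiv X^5\pmod{\mathcal{P}_3}$, i.e., iff $(\pi_1/\pi_3)_5=1$. By class field theory this splitting is equivalent to $[\mathcal{P}_3]$ lying in the subgroup of $C_{k,5}$ attached to $k(\sqrt[5]{\pi_1})$; since $[\mathcal{P}_3]$ generates $H_5$, that subgroup must be $H_5$, hence $k(\sqrt[5]{\pi_1})=K_5$ and $K_2=k(\sqrt[5]{\pi_3})$. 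The opposite residue condition swaps them. The other two pairs offer no prime with trivial valuation in the candidate Kummer generator, so no analogous clean separation is available -- this is the source of the ``or'' in the statement.

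\textbf{Part (3) --- enumeration of types.} By Hilbert's Theorem 94, each capitulation kernel $\ker(j_{K_j/k})$ has order $\geq 5$, hence exactly $5$ or $25$ since $|C_{k,5}|=25$. Part (2) exhibits, for each $j$, a specific subgroup $H_{\ell(j)}$ of order $5$ contained in the kernel, so either $\ker(j_{K_j/k})=H_{\ell(j)}$ (contributing the corresponding non-zero entry) or $\ker(j_{K_j/k})=C_{k,5}$ (contributing $0$). The equivariance $\ker(j_{K_j^{\tau^2}/k})=\ker(j_{K_j/k})^{\tau^2}$ couples the partial/total binary choice on each swapped pair $\{K_2,K_5\}$ and $\{K_3,K_4\}$, while the $\tau^2$-fixed fields $K_1$ and $K_6$ carry a~priori independent choices. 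Enumerating the resulting combinations and absorbing the within-pair labeling ambiguities from (1) produces exactly the list of case (i); cases (ii) and (iii) are then obtained by the relabellings induced by changing the identification of $K_6$ inside $\{k(\sqrt[5]{\pi_1\pi_3}),k(\sqrt[5]{\pi_1\pi_3^4})\}$ and by replacing $(\pi_1/\pi_3)_5=1$ by its negation.

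\textbf{Main obstacle.} Step (2) is a one-line ideal computation and step (1) is a routine combination of the two preliminary propositions. The genuine difficulty lies in (3), because Part (2) guarantees only the containment $H_{\ell(j)}\subseteq\ker(j_{K_j/k})$ and not equality; both regimes $|\ker|=5$ and $|\ker|=25$ really occur, which is why the conclusion is a list. Sharpening the list -- in particular explaining any extra restriction on $i_6$ coming from the special structure of $K_6=k\Gamma_5^{(1)}$, and confirming that the remaining types are each realized -- is precisely the role of the numerical PARI/GP experiments announced in the introduction.
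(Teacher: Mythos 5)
Your treatment of parts (1) and (2) is essentially the paper's own: the same computation of the six subgroups $H_i$ from the generators of Theorem \ref{generator case 1}, the same use of Proposition \ref{prKummer} plus the $\tau^2$-action to separate the pair $\{K_2,K_5\}$ while leaving the other pairings ambiguous, and the same fifth-root extraction of ideals for the explicit capitulations. The skeleton of your part (3) --- Hilbert's Theorem 94 forcing each capitulation kernel to have order $5$ or $25$, with $\tau^2$-equivariance coupling the binary choices on the swapped pairs --- is also how the paper handles the entries $i_1,\dots,i_5$.

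The genuine gap is in the entry $i_6$. Your argument only yields $i_6\in\{0,6\}$ in case (i), because part (2) guarantees merely the containment $H_6\subseteq\ker(j_{K_6/k})$; but every type in the theorem's list for case (i) has $i_6=0$, so what you prove is strictly weaker (it leaves types such as $(1,2,3,4,5,6)$ on the table). The missing ingredient is theoretical, not numerical: the paper uses the isomorphism $C_{k,5}^+\cong C_{\Gamma,5}$ together with the fact that $H_6=C_{k,5}^-$ corresponds to $K_6=k\Gamma_5^{(1)}$. The generator $[\mathcal{P}_1\mathcal{P}_3]$ of $C_{k,5}^+$ is the lift $j_{k/\Gamma}(\mathcal{J})$ of a generator $\mathcal{J}$ of $C_{\Gamma,5}$, and $\mathcal{J}$ becomes principal in $\Gamma_5^{(1)}$, hence $[\mathcal{P}_1\mathcal{P}_3]$ capitulates in $k\Gamma_5^{(1)}=K_6$ no matter which Kummer field $K_6$ turns out to be. In case (i), $K_6=k(\sqrt[5]{\pi_1\pi_3^4})$, so part (2) independently puts $H_6=\langle[\mathcal{P}_1\mathcal{P}_3^4]\rangle$ in the kernel; the two subgroups generate all of $C_{k,5}$, whence $\ker(j_{K_6/k})=C_{k,5}$ and $i_6=0$ is forced. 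In case (ii) the two sources coincide (there $K_6=k(\sqrt[5]{\pi_1\pi_3})$ and both arguments produce the same class $[\mathcal{P}_1\mathcal{P}_3]$), which is exactly why the theorem then only asserts $i_6\in\{0,1\}$. You need to add this $k\Gamma_5^{(1)}$ argument rather than defer the restriction on $i_6$ to the PARI/GP experiments.
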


\begin{proof}
\item[$(1)$] According to theorem \ref{6 exten case 1}, we have that $\tau^2$ permutes $k(\sqrt[5]{\pi_1})$ with $k(\sqrt[5]{\pi_3})$ and $k(\sqrt[5]{\pi_1\pi_3^2})$ with $k(\sqrt[5]{\pi_1\pi_3^3})$, and sets $k(\sqrt[5]{\pi_1\pi_3})$, $k(\sqrt[5]{\pi_1\pi_3^4})$. By class field theory $K_i$ correspond to $H_i\,(i=1,2,3,4,5,6)$, for that we determine explicitly the six subgroups $H_i$ of $C_{k,5}$ as follows:\\
We have that $C_{k,5}\,=\,\langle\mathcal{A}, \mathcal{X}\rangle$, where $H_1\,=\,C_{k,5}^+\,=\,\langle \mathcal{A} \rangle$ and $H_6\,=\,C_{k,5}^-\,=\,\langle \mathcal{X} \rangle$. By theorem \ref{generator case 1} we have $\mathcal{A}\,=\,[\mathcal{P}_1\mathcal{P}_3]$ and $\mathcal{X}\,=\,[\mathcal{P}_1\mathcal{P}_3^4]$, then $\mathcal{A}\mathcal{X}\,=\,[\mathcal{P}_1]^2$, $\mathcal{A}\mathcal{X}^2\,=\,[\mathcal{P}_1\mathcal{P}_3^3]^3$, $\mathcal{A}\mathcal{X}^3\,=\,[\mathcal{P}_1\mathcal{P}_3^2]^4$ and $\mathcal{A}\mathcal{X}^4\,=\,[\mathcal{P}_3]^4$. Hence $H_2\,=\,\langle [\mathcal{P}_1] \rangle$, $H_3\,=\,\langle [\mathcal{P}_1\mathcal{P}_3^3] \rangle$, $H_4\,=\,\langle [\mathcal{P}_1\mathcal{P}_3^2] \rangle$ and $H_5\,=\,\langle [\mathcal{P}_3] \rangle$. Since $\tau^2$ sets $k(\sqrt[5]{\pi_1\pi_3})$ and $k(\sqrt[5]{\pi_1\pi_3^4})$, if $K_1\,=\,k(\sqrt[5]{\pi_1\pi_3})$ then  $K_6\,=\,k(\sqrt[5]{\pi_1\pi_3^4})$ and vise versa. If $(\frac{\pi_1}{\pi_3})_5\,=\,1$ then $X^5 \equiv \pi_1\,(\mathrm{mod}\, \pi_3)$ resolved on $\mathcal{O}_{k_0}$ and by proposition \ref{prKummer} we have that $\pi_1$ splits completly in $k_0(\sqrt[5]{\pi_3})$, which equivalent to say that $\mathcal{P}_1$ splits completly in $k(\sqrt[5]{\pi_3})$, hence $K_2\,=\,k(\sqrt[5]{\pi_3})$ and we get that $K_5\,=\,k(\sqrt[5]{\pi_1})$ and if $K_3\,=\,k(\sqrt[5]{\pi_1\pi_3^2})$ then  $K_4\,=\,k(\sqrt[5]{\pi_1\pi_3^3})$ and vise versa. Since $\pi_1$ and $\pi_3$ divide $\pi_1\pi_3$, $\pi_1\pi_3^2$, $\pi_1\pi_3^3$ and $\pi_1\pi_3^4$, if $(\frac{\pi_1}{\pi_3})_5\,\neq\,1$ then $K_2\,=\,k(\sqrt[5]{\pi_1})$ and $K_5\,=\,k(\sqrt[5]{\pi_3})$.

\item[$(2)$]- Since $\mathcal{P}_i^5\,=\,\pi_i\mathcal{O}_k$ $i=1,3$ we have $(\mathcal{P}_1\mathcal{P}_3)^5\,=\,\pi_1\pi_3\mathcal{O}_k$, then $(\mathcal{P}_1\mathcal{P}_3)^5\,=\,\pi_1\pi_3\mathcal{O}_{k(\sqrt[5]{\pi_1\pi_3})}$ in $k(\sqrt[5]{\pi_1\pi_3})$ and $\pi_1\pi_3\mathcal{O}_{k(\sqrt[5]{\pi_1\pi_3})}\,=\,(\sqrt[5]{\pi_1\pi_3}\mathcal{O}_{k(\sqrt[5]{\pi_1\pi_3})})^5$, hence $\mathcal{P}_1\mathcal{P}_3\mathcal{O}_{k(\sqrt[5]{\pi_1\pi_3})}\,=\,\sqrt[5]{\pi_1\pi_3}\mathcal{O}_{k(\sqrt[5]{\pi_1\pi_3})}$. Thus $\mathcal{P}_1\mathcal{P}_3$ seen in $\mathcal{O}_{k(\sqrt[5]{\pi_1\pi_3})}$ becomes principal, i.e $[\mathcal{P}_1\mathcal{P}_3]$ capitulates in $k(\sqrt[5]{\pi_1\pi_3})$.\\
- Since $(\mathcal{P}_1\mathcal{P}_3^2)^5\,=\,\pi_1\pi_3^2\mathcal{O}_k$, we have $(\mathcal{P}_1\mathcal{P}_3^2)^5\,=\,\pi_1\pi_3^2\mathcal{O}_{k(\sqrt[5]{\pi_1\pi_3^2})}$ in $k(\sqrt[5]{\pi_1\pi_3^2})$ and $\pi_1\pi_3^2\mathcal{O}_{k(\sqrt[5]{\pi_1\pi_3^2})}\,=\,(\sqrt[5]{\pi_1\pi_3^2}\mathcal{O}_{k(\sqrt[5]{\pi_1\pi_3^2})})^5$, hence $\mathcal{P}_1\mathcal{P}_3^2\mathcal{O}_{k(\sqrt[5]{\pi_1\pi_3^2})}\,=\,\sqrt[5]{\pi_1\pi_3^2}\mathcal{O}_{k(\sqrt[5]{\pi_1\pi_3^2})}$. Thus $\mathcal{P}_1\mathcal{P}_3^2$ seen in $\mathcal{O}_{k(\sqrt[5]{\pi_1\pi_3^2})}$ becomes principal, i.e $[\mathcal{P}_1\mathcal{P}_3^2]$ capitulates in $k(\sqrt[5]{\pi_1\pi_3^2})$. By the same reasoning we have $[\mathcal{P}_1\mathcal{P}_3^3]$ capitulates in $k(\sqrt[5]{\pi_1\pi_3^3})$ and  $[\mathcal{P}_1\mathcal{P}_3^4]$ capitulates in $k(\sqrt[5]{\pi_1\pi_3^4})$ .\\
- We have $\mathcal{P}_1^5\,=\,\pi_1\mathcal{O}_{k}$, then $\mathcal{P}_1\mathcal{O}_{k(\sqrt[5]{\pi_1})}\,=\,\sqrt[5]{\pi_1}\mathcal{O}_{k(\sqrt[5]{\pi_1})}$. Hence $[\mathcal{P}_1]$ capitulates in $k(\sqrt[5]{\pi_1})$. By the same reasoning we have $[\mathcal{P}_3]$ capitulates in $k(\sqrt[5]{\pi_3})$.\\

\item[$(3)$] $(i)$ If $(\frac{\pi_1}{\pi_3})_5\,=\,1$ and $K_6\,=\,k(\sqrt[5]{\pi_1\pi_3^4})$ we have $[\mathcal{P}_1\mathcal{P}_3^4]$ capitulates in $K_6$. According to [\ref{Mani}, Lemma 6.2] we have that $C_{k,5}^+ \cong C_{\Gamma,5}$. We denote by $j_{k/\Gamma}: C_{\Gamma,5} \longleftarrow C_{k,5}$ the homomorphism induced by extension of ideals of $\Gamma$ in $k$. By class field theory we have that $C_{k,5}^-$ correspond to $K_6\,=\,k\Gamma_5^{(1)}$, and since $C_{k,5}^+\,=\,\langle[\mathcal{P}_1\mathcal{P}_3]\rangle$ and $\mathcal{P}_1\mathcal{P}_3\,=\,j_{k/\Gamma}(\mathcal{J})$ such that $C_{\Gamma,5}\,=\,\langle\mathcal{J}\rangle$, then $[\mathcal{P}_1\mathcal{P}_3]$ capitulates in $K_6$, and as $C_{k,5}\,=\,\langle [\mathcal{P}_1\mathcal{P}_3], [\mathcal{P}_1\mathcal{P}_3^4]\rangle$, then all classes capitulate in $K_6\,=\,k(\sqrt[5]{\pi_1\pi_3^4})$. We determine possible types of capitulation  $(i_1,i_2,i_3,i_4,i_5,i_6)$. We have that $i_6\,=\,0$, $K_2\,=\,K_5^{\tau^2}$, $K_3\,=\,K_4^{\tau^2}$ and $C_{k,5}\,=\,C_{k,5}^{\tau^2}$. If $i_1\neq 0$ we have $i_1 = 1$, if $i_2\neq 0$ we have $i_2 = 2$ and if $i_5\neq 0$ we have $i_5 = 5$. $i_3$ and $i_4$ are both nulls or non nulls, so if $i_3$ and $i_4$ $\neq 0$, then $(i_3, i_4)\,=\,(3,4)$ or $(4,3)$. Thus the possible types of capitulation are $(0,0,0,0,0,0)$, $(1,0,0,0,0,0)$, 
$(0,2,0,0,5,0)$, 
$(1,2,0,0,5,0)$,  
$\{(0,0,3,4,0,0)$ or $(0,0,4,3,0,0)\}$, 
$\{(1,0,3,4,0,0)$ or $(1,0,4,3,0,0)\}$, 
$\{(0,2,3,4,5,0)$ or $(0,2,4,3,5,0)\}$, 
$\{(1,2,3,4,5,0)$ or $(1,2,4,3,5,0)\}$.\\

$(ii)$ If $(\frac{\pi_1}{\pi_3})_5\,=\,1$ and $K_6\,=\,k(\sqrt[5]{\pi_1\pi_3})$ we have $[\mathcal{P}_1\mathcal{P}_3]$ capitulates in $K_6$ then if $i_6 \neq 0$ we have $i_6 = 1$. $[\mathcal{P}_1\mathcal{P}_3^4]$ capitulates in $K_1$ then if $i_1 \neq 0$ we have $i_1 = 6$, so the same possible types of capitulation accur as in $(i)$ with $i_6 = 0$ or $1$ and $i_1 = 0$ or $6$.\\

$(iii)$ If $(\frac{\pi_1}{\pi_3})_5\,\neq\,1$, by $(1)$ we have 
$K_2\,=\,k(\sqrt[5]{\pi_3})$ and $K_5\,=\,k(\sqrt[5]{\pi_1})$ then the same possible types of capitulation accur as $(i)$ and $(ii)$ by permuting 2 and 5.
\end{proof}

\subsection{The case $n\,=\,p^eq\,\equiv\,\pm1\pm7\,(\mathrm{mod}\,25)$, where $p\,\not\equiv\,1\,(\mathrm{mod}\,25)$, $q\,\not\equiv\,\pm7\,(\mathrm{mod}\,25)$}
Let $k\,=\,\Gamma(\zeta_5)$ be the normal closure of $\Gamma\,=\,\mathbb{Q}(\sqrt[5]{n})$, where $n\,=\,p^eq\,\equiv\,\pm1\pm7\,(\mathrm{mod}\,25)$ such that $p\,\not\equiv\,1,(\mathrm{mod}\,25)$, $q\,\not\equiv\,\pm7\,(\mathrm{mod}\,25)$ and $e\in \{1,2,3,4\}$. Since $q\,\equiv\,\pm2\,(\mathrm{mod}\,5)$ we have that $q$ is inert in $k_0\,=\,\mathbb{Q}(\zeta_5)$, so we can take in the following $q\,=\,\pi_5$ a prime in $k_0$. As before, by $\mathcal{P}_1, \mathcal{P}_2, \mathcal{P}_3, \mathcal{P}_4$ and $\mathcal{P}_5$ we denote respectivly the prime ideals of $k$ above $\pi_1, \pi_2, \pi_3, \pi_4$ and $\pi_5$ in $k_0$, such that $\mathcal{P}_i^5\,=\,\pi_i\mathcal{O}_k\, (i=1,2,3,4,5)$. We have that $\tau^2$ permutes $\pi_1$ with $\pi_3$, then $\tau^2$ permutes $\mathcal{P}_1$ with $\mathcal{P}_3$, but $\tau^2$ sets $q\,=\,\pi_5$ and also $\mathcal{P}_5$.\\
The six intermediate extensions of $k_5^{(1)}/k$ are determined as follows:
\begin{theorem}\label{6 exten case 2}
Let $k, n, \pi_1, \pi_2, \pi_3, \pi_4$ and $\pi_5$ as above. Put $x_1\,=\,\pi_1\pi_5^{h_1}$ and $x_2\,=\,\pi_1\pi_3^{4}$ are choosen  such that $x_1\,\equiv\,x_2\,\equiv 1\,(\mathrm{mod}\,\lambda^5)$, where $h_1 \in \{1,2,3,4\}$. Then:
\item[$(1)$] $k_5^{(1)}\,=\, k(\sqrt[5]{x_1}, \sqrt[5]{x_2})$.
\item[$(2)$] The six intermediate extensions of $k_5^{(1)}/k$ are:  $k(\sqrt[5]{x_1})$, $k(\sqrt[5]{x_2})$, $k(\sqrt[5]{\pi_1\pi_3\pi_5^{2h_1}})$, $k(\sqrt[5]{\pi_1^2\pi_3^4\pi_5^{h_1}})$, $k(\sqrt[5]{\pi_1^4\pi_3^2\pi_5^{h_1}})$ and $k(\sqrt[5]{\pi_3\pi_5^{h_1}})$. Furthermore $\tau^2$ permutes $k(\sqrt[5]{\pi_1^2\pi_3^4\pi_5^{h_1}})$ with $k(\sqrt[5]{\pi_1^4\pi_3^2\pi_5^{h_1}})$ and $k(\sqrt[5]{x_1})$ with $k(\sqrt[5]{\pi_3\pi_5^{h_1}})$, and sets $k(\sqrt[5]{x_2})$, $k(\sqrt[5]{\pi_1\pi_3\pi_5^{2h_1}})$.
\end{theorem}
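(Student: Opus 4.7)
The plan for (1) is to combine the identification $(k/k_0)^*=k_5^{(1)}$, which holds in our setting because $C_{k,5}=C_{k,5}^{(\sigma)}$ forces $C_{k,5}^{1-\sigma}=\{1\}$ (as noted at the start of Section 3), with Proposition \ref{prop genre} applied to the factorization $n=\pi_1^e\pi_2^e\pi_3^e\pi_4^e\pi_5$ in $\mathcal{O}_{k_0}$. The hypothesis $p\not\equiv 1\pmod{25}$ implies that none of $\pi_1,\ldots,\pi_4$ is individually $\equiv\pm1,\pm7\pmod{\lambda^5}$, and the hypothesis $q\not\equiv\pm7\pmod{25}$ gives the same for $\pi_5$. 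Hence $f=0$ in the notation of Proposition \ref{prop genre}, and its part (i) supplies exponents $h_i\in\{1,\ldots,4\}$ with $\pi_1\pi_i^{h_i}\equiv\pm1,\pm7\pmod{\lambda^5}$ for $i=2,3,4,5$.

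The next step is to invoke part (iii) of Proposition \ref{prop genre}, together with the value of $q^*$ fixed by $n\equiv\pm1\pm7\pmod{25}$, to delete two of the four candidate radicands so that the resulting genus field has degree $5^2=|C_{k,5}^{(\sigma)}|$ over $k$. Using the complex conjugation $\tau^2$, which satisfies $\pi_1\leftrightarrow\pi_3$, $\pi_2\leftrightarrow\pi_4$, and fixes $\pi_5$, together with the product identity $\pi_1\pi_2\pi_3\pi_4=p$, I would show that the pairings involving $\pi_2$ and $\pi_4$ are fifth-power-equivalent (modulo units) to products of the pairings involving $\pi_3$ and $\pi_5$. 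Along the way I would verify $\pi_1\pi_3^4\equiv 1\pmod{\lambda^5}$ (so that $x_2=\pi_1\pi_3^4$ is admissible) and select $h_1$ with $\pi_1\pi_5^{h_1}\equiv 1\pmod{\lambda^5}$, producing $x_1=\pi_1\pi_5^{h_1}$ and $k_5^{(1)}=k(\sqrt[5]{x_1},\sqrt[5]{x_2})$.

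For (2), since $\mathrm{Gal}(k_5^{(1)}/k)\cong(\mathbb{Z}/5\mathbb{Z})^2$, the six intermediate fields are $k(\sqrt[5]{x_1^a x_2^b})$ as $(a,b)$ ranges over representatives of the six lines of $\mathbb{F}_5^2$, namely $(1,0),(0,1),(1,1),(1,2),(1,3),(1,4)$. Substituting $x_1=\pi_1\pi_5^{h_1}$, $x_2=\pi_1\pi_3^4$ and clearing fifth powers in each radicand (for instance $(x_1 x_2^2)^2=\pi_1^6\pi_3^{16}\pi_5^{2h_1}\equiv\pi_1\pi_3\pi_5^{2h_1}$ modulo fifth powers, so $k(\sqrt[5]{x_1 x_2^2})=k(\sqrt[5]{\pi_1\pi_3\pi_5^{2h_1}})$) reproduces the six radicands listed in the theorem. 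The $\tau^2$-action is then handled exactly as in Theorem \ref{6 exten case 1}(2): apply $\tau^2(\pi_1)=\pi_3$, $\tau^2(\pi_3)=\pi_1$, $\tau^2(\pi_5)=\pi_5$ to each radicand and simplify modulo fifth powers to determine which radicals $\tau^2$ fixes and which it swaps.

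The principal obstacle is the reduction from four candidate Kummer generators down to two in (1): rigorously invoking Proposition \ref{prop genre}(iii) and the norm indicator $q^*$ to conclude both that $[k_5^{(1)}:k]=5^2$ matches $|C_{k,5}^{(\sigma)}|$ exactly and that the two surviving generators can be taken to be precisely $x_1=\pi_1\pi_5^{h_1}$ and $x_2=\pi_1\pi_3^4$, rather than some other pair of pairings $\pi_1\pi_i^{h_i}$. The remaining manipulations in (2) are then routine fifth-power bookkeeping analogous to Theorem \ref{6 exten case 1}.
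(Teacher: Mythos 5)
Your proposal follows essentially the same route as the paper: part (1) is obtained from the identification $k_5^{(1)}=(k/k_0)^*$ together with Proposition \ref{prop genre} applied to $n=\pi_1^e\pi_2^e\pi_3^e\pi_4^e\pi_5$, and part (2) by running over the six lines $x_1^ax_2^b$, reducing each radicand modulo fifth powers, and transporting the $\tau^2$-computation of Theorem \ref{6 exten case 1}. The one step you flag as the principal obstacle---rigorously justifying that the two surviving genus-field generators can be taken to be exactly $x_1=\pi_1\pi_5^{h_1}$ and $x_2=\pi_1\pi_3^{4}$---is precisely the step the paper itself handles by assertion (it states only that (1) ``follows from proposition \ref{prop genre}'' and that ``we assume that $h_2=4$''), so your account is, if anything, more explicit about where the real work lies.
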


\begin{proof}
Since $k\,=\,k_0(\sqrt[5]{n})$ we can write $n$ in $k_0$ as $n\,=\,\pi_1^e\pi_2^e\pi_3^e\pi_4^e\pi_5$ with $\pi_i\,\not\equiv\,1\,(\mathrm{mod}\,\lambda^5)$ because $p\,\not\equiv\,1\,(\mathrm{mod}\,25)$ and $q\,\not\equiv\,1\,(\mathrm{mod}\,25)$. By proposition \ref{prop genre} there exists $h_1, h_2 \in \{1,..,4\}$ such that $\pi_1\pi_5^{h_1}\,\equiv\, \pm1,\pm7\, (\mathrm{mod}\,\lambda^5)$ and $\pi_1\pi_3^{h_2}\,\equiv\, \pm1,\pm7\, (\mathrm{mod}\,\lambda^5)$. to investigate the correspondence between the six intermediate extension of $k_5^{(1)}/k$ and the six subgroups of $C_{k,5}$, we assume that $h_2\,=\,4$. Put $x_1\,=\,\pi_1\pi_5^{h_1}$ and $x_2\,=\,\pi_1\pi_3^{4}$.
\item[$(1)$] The fact that $k_5^{(1)}\,=\, k(\sqrt[5]{x_1}, \sqrt[5]{x_2})$ follows from proposition \ref{prop genre}.
\item[$(2)$] The six intermediate extensions are: $k(\sqrt[5]{x_1})$, $k(\sqrt[5]{x_2})$, $k(\sqrt[5]{x_1x_2})$, $k(\sqrt[5]{x_1x_2^2})$, $k(\sqrt[5]{x_1x_2^3})$ and $k(\sqrt[5]{x_1x_2^4})$. Since $x_1\,=\,\pi_1\pi_5^{h_1}$ and $x_2\,=\,\pi_1\pi_5^{4}$, we have $k(\sqrt[5]{x_1x_2})\,=\,k(\sqrt[5]{\pi_1^2\pi_3^4\pi_5^{h_1}})$, $k(\sqrt[5]{x_1x_2^2})\,=\,k(\sqrt[5]{\pi_1\pi_3\pi_5^{2h_1}})$, $k(\sqrt[5]{x_1x_2^3})\,=\,k(\sqrt[5]{\pi_1^4\pi_3^2\pi_5^{h_1}})$ and $k(\sqrt[5]{x_1x_2^4})\,=\,k(\sqrt[5]{\pi_3\pi_5^{h_1}})$. Since $\pi_1^{\tau^2}\,=\,\pi_3$, $\pi_3^{\tau^2}\,=\,\pi_1$ and $\pi_5^{\tau^2}\,=\,\pi_5$, and by the same reasoning as theorem \ref{6 exten case 1} we prove that $\tau^2$ permutes $k(\sqrt[5]{\pi_1^2\pi_3^4\pi_5^{h_1}})$ with $k(\sqrt[5]{\pi_1^4\pi_3^2\pi_5^{h_1}})$ and $k(\sqrt[5]{x_1})$ with $k(\sqrt[5]{\pi_3\pi_5^{h_1}})$, and sets $k(\sqrt[5]{x_2})$, $k(\sqrt[5]{\pi_1\pi_3\pi_5^{2h_1}})$.
\end{proof}
The generators of $C_{k,5}$ in this case are determined as follows:
\begin{theorem}\label{generator case 2} Let $k, n, \pi_1, \pi_2, \pi_3, \pi_4, \pi_5$ and $h_1$ as above. Let $\mathcal{P}_1, \mathcal{P}_2, \mathcal{P}_3, \mathcal{P}_4$ and $\mathcal{P}_5$ prime ideals of $k$ such that $\mathcal{P}_i^5\,=\,\pi_i\mathcal{O}_{k_0}\, (i=1,2,3,4,5)$. Then:
\begin{center}
$C_{k,5}\,=\,\langle [\mathcal{P}_1\mathcal{P}_3\mathcal{P}_5^{2h_1}], [\mathcal{P}_1\mathcal{P}_3^4]\rangle$
\end{center}
\end{theorem}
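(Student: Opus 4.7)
The plan is to adapt the proof of Theorem \ref{generator case 1} to the current ramification pattern, where in addition to the four prime ideals $\mathcal{P}_1,\mathcal{P}_2,\mathcal{P}_3,\mathcal{P}_4$ above the split prime $p$ we now also have the prime $\mathcal{P}_5$ above the inert prime $q$. The critical new feature is that $\tau^2$ fixes $\mathcal{P}_5$ pointwise, while it still permutes $\mathcal{P}_1\leftrightarrow\mathcal{P}_3$ and $\mathcal{P}_2\leftrightarrow\mathcal{P}_4$. First I would verify that the two candidate classes lie in the correct $\tau^2$-eigenspaces: using $[\mathcal{P}_i]^5=1$ in $C_{k,5}$ and the $\tau^2$-action just described, a direct calculation yields
\[
[\mathcal{P}_1\mathcal{P}_3\mathcal{P}_5^{2h_1}]^{\tau^2}=[\mathcal{P}_3\mathcal{P}_1\mathcal{P}_5^{2h_1}]=[\mathcal{P}_1\mathcal{P}_3\mathcal{P}_5^{2h_1}],\qquad [\mathcal{P}_1\mathcal{P}_3^4]^{\tau^2}=[\mathcal{P}_3\mathcal{P}_1^4]=[\mathcal{P}_1\mathcal{P}_3^4]^{-1},
\]
so $[\mathcal{P}_1\mathcal{P}_3\mathcal{P}_5^{2h_1}]\in C_{k,5}^+$ and $[\mathcal{P}_1\mathcal{P}_3^4]\in C_{k,5}^-$.

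Next, I would show that neither class is trivial. The strong ambiguous class group is now $C_{k,s}^{(\sigma)}=\langle [\mathcal{P}_1],[\mathcal{P}_2],[\mathcal{P}_3],[\mathcal{P}_4],[\mathcal{P}_5]\rangle$, and by [\ref{Mani}, section 5.3] it is contained in $C_{k,5}^{(\sigma)}=C_{k,5}$ with cyclic quotient of order at most $5$; thus $|C_{k,s}^{(\sigma)}|\in\{5,25\}$. Splitting into these two sub-cases exactly as in the proof of Theorem \ref{generator case 1}, and using both the $\tau^2$-pairing of the $\mathcal{P}_i$ and the principal relation $[\mathcal{P}_1^e\mathcal{P}_2^e\mathcal{P}_3^e\mathcal{P}_4^e\mathcal{P}_5]=[\sqrt[5]{n}\,\mathcal{O}_k]=1$, one rules out that all the $\mathcal{P}_i$ corresponding to a $\tau$-orbit could simultaneously be principal (otherwise $C_{k,s}^{(\sigma)}$ would collapse to a subgroup too small to meet the order constraint). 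This forces both $[\mathcal{P}_1\mathcal{P}_3\mathcal{P}_5^{2h_1}]$ and $[\mathcal{P}_1\mathcal{P}_3^4]$ to be non-trivial in $C_{k,5}$.

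Finally, since $C_{k,5}$ is of type $(5,5)$ and decomposes as $C_{k,5}^+\oplus C_{k,5}^-$ with each factor cyclic of order $5$, the non-triviality just established gives $C_{k,5}^+=\langle[\mathcal{P}_1\mathcal{P}_3\mathcal{P}_5^{2h_1}]\rangle$ and $C_{k,5}^-=\langle[\mathcal{P}_1\mathcal{P}_3^4]\rangle$, and the theorem follows. The main obstacle I expect lies in the second step: the extra generator $[\mathcal{P}_5]$ and the extra inert-prime contribution to the principal relation require a slightly more delicate case analysis than in Theorem \ref{generator case 1}. In particular the specific exponent $2h_1$ at $\mathcal{P}_5$ (rather than $h_1$) must be justified, and this is forced by the field identification $k(\sqrt[5]{x_1x_2^2})=k(\sqrt[5]{\pi_1\pi_3\pi_5^{2h_1}})$ from Theorem \ref{6 exten case 2}, which corresponds via class field theory precisely to the $\tau^2$-fixed subgroup $C_{k,5}^+$.
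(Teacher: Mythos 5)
Your overall strategy (decompose $C_{k,5}$ as $C_{k,5}^+\times C_{k,5}^-$, check the $\tau^2$-eigenvalue of each candidate class, argue non-principality) is the same as the paper's, and your eigenspace computation matches the one given there. But there is a genuine gap in your second step. The paper does not split into the two sub-cases $|C_{k,s}^{(\sigma)}|\in\{5,25\}$: it first invokes [\ref{FOU1}, theorem 1.1] to record that for a radicand of the form $n=p^eq\equiv\pm1\pm7\ (\mathrm{mod}\ 25)$ no element $\zeta_5^i(1+\zeta_5)^j$ is a norm from $k-\{0\}$, and then uses [\ref{Mani}, section 5.3] to conclude that $C_{k,5}=C_{k,5}^{(\sigma)}=C_{k,s}^{(\sigma)}=\langle[\mathcal{P}_1],\dots,[\mathcal{P}_5]\rangle$, i.e.\ the strong ambiguous class group already has order $25$. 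This input is not optional. In your sub-case $|C_{k,s}^{(\sigma)}|=5$ the group $\langle[\mathcal{P}_1],\dots,[\mathcal{P}_5]\rangle$ is cyclic of order $5$ and is stable under $\tau^2$; both of your candidate classes $[\mathcal{P}_1\mathcal{P}_3\mathcal{P}_5^{2h_1}]$ and $[\mathcal{P}_1\mathcal{P}_3^4]$ then lie inside this single cyclic group, so they generate a subgroup of order at most $5$ and cannot generate the full $(5,5)$ group, no matter which individual $\mathcal{P}_i$ you show to be non-principal. Ruling out that ``all the $\mathcal{P}_i$ in a $\tau$-orbit are simultaneously principal'' does not repair this: it leaves open, for instance, $[\mathcal{P}_1]=[\mathcal{P}_3]\neq 1$, which kills $[\mathcal{P}_1\mathcal{P}_3^4]$. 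So the sentence ``This forces both classes to be non-trivial'' does not follow from what precedes it, and the concluding step collapses in that sub-case.

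The fix is exactly the paper's: cite the norm-theoretic fact specific to this form of $n$ to eliminate the sub-case $|C_{k,s}^{(\sigma)}|=5$ from the outset, after which your eigenspace argument identifies one non-trivial class in each of $C_{k,5}^+$ and $C_{k,5}^-$ and the generation statement follows. (Your remarks about the principal relation $(\mathcal{P}_1\mathcal{P}_2\mathcal{P}_3\mathcal{P}_4)^e\mathcal{P}_5=\sqrt[5]{n}\,\mathcal{O}_k$ and about the role of the exponent $2h_1$ are reasonable supplementary observations, but they do not substitute for the missing input.)
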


\begin{proof}
According to [\ref{FOU1}, theorem 1.1], for this case of the radicand $n$, we have that $\zeta_5^i(1+\zeta_5)^j$ is not norm of element in $k-\{0\}$ for any exponents $i$ and $j$, then by [\ref{Mani}, section 5.3], we have $C_{k,5}\,=\,C_{k,5}^{(\sigma)}\,=\,C_{k,s}^{(\sigma)}\,=\,\langle [\mathcal{P}_1], [\mathcal{P}_2], [\mathcal{P}_3], [\mathcal{P}_4], [\mathcal{P}_5]\rangle$. Since $\mathcal{P}_1^{\tau^2}\,=\,\mathcal{P}_3$, $\mathcal{P}_2^{\tau^2}\,=\,\mathcal{P}_4$ and $\mathcal{P}_5^{\tau^2}\,=\,\mathcal{P}_5$, as the proof of theorem \ref{generator case 1} we have that $\mathcal{P}_1, \mathcal{P}_3$ and $\mathcal{P}_5$ are non principals. As $[\mathcal{P}_1\mathcal{P}_3\mathcal{P}_5^{2h_1}]^{\tau^2}\,=\,[(\mathcal{P}_1\mathcal{P}_3\mathcal{P}_5^{2h_1})^{\tau^2}]\,=\,[\mathcal{P}_3\mathcal{P}_1\mathcal{P}_5^{2h_1}]\,=\,[\mathcal{P}_1\mathcal{P}_3\mathcal{P}_5^{2h_1}]$ then $C_{k,5}^+\,=\,\langle[\mathcal{P}_1\mathcal{P}_3\mathcal{P}_5^{2h_1}]\rangle$, and we have that $C_{k,5}^-\,=\,\langle[\mathcal{P}_1\mathcal{P}_3^4]\rangle$. Hence $C_{k,5}\,=\,\langle [\mathcal{P}_1\mathcal{P}_3\mathcal{P}_5^{2h_1}], [\mathcal{P}_1\mathcal{P}_3^4]\rangle$.
\end{proof}

The main theorem of capitulation in this case is as follows:
\begin{theorem}
We keep the same assumptions as theorem \ref{generator case 2} Then:
\item[$(1)$] $K_1\,=\,k(\sqrt[5]{\pi_1\pi_3^4})$ or $k(\sqrt[5]{\pi_1\pi_3\pi_5^{2h_1}})$, $K_2\,=\,k(\sqrt[5]{\pi_1\pi_5^{h_1}})$ or $k(\sqrt[5]{\pi_3\pi_5^{h_1}})$, $K_3\,=\,k(\sqrt[5]{\pi_1^2\pi_3^4\pi_5^{h_1}})$ or $k(\sqrt[5]{\pi_1^4\pi_3^2\pi_5^{h_1}})$ , $K_4\,=\,k(\sqrt[5]{\pi_1^4\pi_3^2\pi_5^{h_1}})$ or $k(\sqrt[5]{\pi_1^2\pi_3^4\pi_5^{h_1}})$, $K_5\,=\,k(\sqrt[5]{\pi_3\pi_5^{h_1}})$ or $k(\sqrt[5]{\pi_1\pi_5^{h_1}})$  and $K_6\,=\,k(\sqrt[5]{\pi_1\pi_3\pi_5^{2h_1}})$ or $k(\sqrt[5]{\pi_1\pi_3^4})$.

\item[$(2)$] $[\mathcal{P}_1\mathcal{P}_3\mathcal{P}_5^{2h_1}]$ capitulates in $k(\sqrt[5]{\pi_1\pi_3\pi_5^{2h_1}})$, $[\mathcal{P}_1\mathcal{P}_5^{h_1}]$ capitulates in $k(\sqrt[5]{\pi_1\pi_5^{h_1}})$, $[\mathcal{P}_1^2\mathcal{P}_3^4\mathcal{P}_5^{h_1}]$ capitulates in $k(\sqrt[5]{\pi_1^2\pi_3^4\pi_5^{h_1}})$, $[\mathcal{P}_1^4\mathcal{P}_3^2\mathcal{P}_5^{h_1}]$ capitulates in $k(\sqrt[5]{\pi_1^4\pi_3^2\pi_5^{h_1}})$, $[\mathcal{P}_3\mathcal{P}_5^{h_1}]$ capitulates in $k(\sqrt[5]{\pi_3\pi_5^{h_1}})$ and $[\mathcal{P}_1\mathcal{P}_3^{4}]$ capitulates in $k(\sqrt[5]{\pi_1\pi_3^{4}})$.

\item[$(3)$] - If $K_1\,=\, k(\sqrt[5]{\pi_1\pi_3\pi_5^{2h_1}})$, then the possible types of capitulation are: $(0,0,0,0,0,0)$, $(1,0,0,0,0,0)$, 
$\{(0,5,0,0,2,0)$ or $(0,2,0,0,5,0)\}$, 
$\{(1,5,0,0,2,0)$ or $(1,2,0,0,5,0)\}$, 
$\{(0,5,4,3,2,0)$ or $(0,2,4,3,5,0)\}$, 
$\{(1,5,4,3,2,0)$ or $(1,2,4,3,5,0)\}$,
$\{(0,5,3,4,2,0)$ or $(0,2,3,4,5,0)\}$,
$\{(1,5,3,4,2,0)$ or $(1,2,3,4,5,0)\}$,
$\{(0,0,3,4,0,0)$ or $(0,0,4,3,0,0)\}$,
$\{(1,0,3,4,0,0)$ or $(1,0,4,3,0,0)\}$.\\
- If $K_1\,=\,k(\sqrt[5]{\pi_1\pi_3^4})$, then the same possible types occur, with $i_6$ takes value 0 or 1.
\end{theorem}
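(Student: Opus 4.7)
The plan is to proceed in close parallel with Theorem~\ref{theo capi case 1}. First I would make explicit the six subgroups $H_i \subset C_{k,5}$ in terms of the prime ideals $\mathcal{P}_1,\mathcal{P}_3,\mathcal{P}_5$. Starting from $\mathcal{A}=[\mathcal{P}_1\mathcal{P}_3\mathcal{P}_5^{2h_1}]$ and $\mathcal{X}=[\mathcal{P}_1\mathcal{P}_3^4]$ (Theorem~\ref{generator case 2}), I would compute each product $\mathcal{A}\mathcal{X}^j$ modulo fifth powers, using that $[\mathcal{P}_i]^5=1$ since $\mathcal{P}_i^5=\pi_i\mathcal{O}_k$, and then rewrite each as a fifth power of a single ``basic'' class. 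A short calculation yields $H_2=\langle[\mathcal{P}_1\mathcal{P}_5^{h_1}]\rangle$, $H_3=\langle[\mathcal{P}_1^4\mathcal{P}_3^2\mathcal{P}_5^{h_1}]\rangle$, $H_4=\langle[\mathcal{P}_1^2\mathcal{P}_3^4\mathcal{P}_5^{h_1}]\rangle$, and $H_5=\langle[\mathcal{P}_3\mathcal{P}_5^{h_1}]\rangle$, while $H_1=\langle\mathcal{A}\rangle$ and $H_6=\langle\mathcal{X}\rangle$.

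Part (1) then follows by matching these six subgroups with the six Kummer fields of Theorem~\ref{6 exten case 2} via the $\tau^2$-action. Proposition~\ref{tau permut} says that $\tau^2$ fixes $K_1$ and $K_6$ and exchanges $K_2\leftrightarrow K_5$ and $K_3\leftrightarrow K_4$, while Theorem~\ref{6 exten case 2} identifies $k(\sqrt[5]{\pi_1\pi_3^4})$ and $k(\sqrt[5]{\pi_1\pi_3\pi_5^{2h_1}})$ as the two $\tau^2$-fixed Kummer fields, the remaining four forming the $\tau^2$-swapped pairs $\{k(\sqrt[5]{\pi_1\pi_5^{h_1}}),k(\sqrt[5]{\pi_3\pi_5^{h_1}})\}$ and $\{k(\sqrt[5]{\pi_1^2\pi_3^4\pi_5^{h_1}}),k(\sqrt[5]{\pi_1^4\pi_3^2\pi_5^{h_1}})\}$. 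Comparing gives the assignments in (1). Part (2) is the same Kummer principal-ideal trick as in Theorem~\ref{theo capi case 1}(2): for $\alpha=\pi_1^{a_1}\pi_3^{a_3}\pi_5^{a_5}$ and $\beta=\mathcal{P}_1^{a_1}\mathcal{P}_3^{a_3}\mathcal{P}_5^{a_5}$, one has $\beta^5=\alpha\mathcal{O}_k=(\sqrt[5]{\alpha})^5$ inside $k(\sqrt[5]{\alpha})$, and hence $\beta\,\mathcal{O}_{k(\sqrt[5]{\alpha})}=\sqrt[5]{\alpha}\,\mathcal{O}_{k(\sqrt[5]{\alpha})}$ in the torsion-free ideal group of the overfield. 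Applied to the six exponent-vectors listed in (2), this establishes the capitulation statements directly.

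For Part (3), I would combine (2) with the $\Gamma$-lift argument used in Theorem~\ref{theo capi case 1}(3)(i): by [\ref{Mani}, Lemma~6.2], $C_{k,5}^+=H_1$ is the image of $C_{\Gamma,5}$ under $j_{k/\Gamma}$, so $\mathcal{A}$ capitulates in $K_6=k\Gamma_5^{(1)}$. Two cases then arise according to which $\tau^2$-fixed Kummer field is $K_6$. If $K_6=k(\sqrt[5]{\pi_1\pi_3^4})$ (so $K_1=k(\sqrt[5]{\pi_1\pi_3\pi_5^{2h_1}})$), then by (2) also $\mathcal{X}\in H_6$ capitulates in $K_6$, so all of $C_{k,5}$ capitulates in $K_6$, forcing $i_6=0$; in $K_1$ only $\mathcal{A}\in H_1$ is known to capitulate, giving $i_1\in\{0,1\}$. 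If instead $K_6=k(\sqrt[5]{\pi_1\pi_3\pi_5^{2h_1}})$, then the class from (2) lies in $H_1$, so $i_6\in\{0,1\}$, and in $K_1=k(\sqrt[5]{\pi_1\pi_3^4})$ the class from (2) is $\mathcal{X}\in H_6$, giving $i_1\in\{0,6\}$. For the paired indices, Proposition~\ref{tau permut} and the $\tau^2$-action on $C_{k,5}$ force $(i_2,i_5)$ and $(i_3,i_4)$ to be either both zero or both non-zero with values swapped by $\tau^2$; the non-zero values are read off from (2) to be $\{2,5\}$ and $\{3,4\}$, with the specific ordering depending on which Kummer field is $K_2$ (resp.\ $K_3$).

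The main obstacle will be organising this case analysis carefully: one must ensure that the two ``or'' alternatives inside each $\tau^2$-orbit in (1) are consistently coupled with the resulting tuples in (3), that every listed type is genuinely realisable in at least one admissible configuration, and that no additional constraint coming from the $\mathrm{Gal}(k/\mathbb{Q})$-action on capitulation kernels silently rules out any combination.
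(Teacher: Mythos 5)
Your overall strategy coincides with the paper's: compute the six subgroups $H_i$ from $\mathcal{A}=[\mathcal{P}_1\mathcal{P}_3\mathcal{P}_5^{2h_1}]$ and $\mathcal{X}=[\mathcal{P}_1\mathcal{P}_3^4]$, use the Kummer principal-ideal computation for part (2), and use $C_{k,5}^+\cong C_{\Gamma,5}$ together with $K_6=k\Gamma_5^{(1)}$ for part (3); your case discussion in (3), including $i_1\in\{0,6\}$ and $i_6\in\{0,1\}$ when $K_1=k(\sqrt[5]{\pi_1\pi_3^4})$, is exactly what the paper does.

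The gap is in part (1). The $\tau^2$-equivariance of the class-field-theoretic correspondence tells you that $\{K_1,K_6\}$ are the two $\tau^2$-fixed Kummer fields and that $\{K_2,K_5\}$ and $\{K_3,K_4\}$ are $\tau^2$-swapped pairs, but it does \emph{not} decide which of the two swapped pairs of fields is $\{K_2,K_5\}$ and which is $\{K_3,K_4\}$: both assignments are compatible with the Galois action, so ``comparing gives the assignments'' does not suffice. The paper closes exactly this point with an arithmetic input from Proposition \ref{prKummer}: since $H_2=\langle[\mathcal{P}_1\mathcal{P}_5^{h_1}]\rangle$, the ideal $\mathcal{P}_1\mathcal{P}_5^{h_1}$ must split completely in $K_2$, whereas $\pi_1$ and $\pi_5$ divide $\pi_1^2\pi_3^4\pi_5^{h_1}$ and $\pi_1^4\pi_3^2\pi_5^{h_1}$ and hence ramify in $k(\sqrt[5]{\pi_1^2\pi_3^4\pi_5^{h_1}})$ and $k(\sqrt[5]{\pi_1^4\pi_3^2\pi_5^{h_1}})$; this forces $K_2$ into the pair $\{k(\sqrt[5]{\pi_1\pi_5^{h_1}}),k(\sqrt[5]{\pi_3\pi_5^{h_1}})\}$. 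You need this (or an equivalent splitting/ramification argument) to pin the pairs down. A minor further remark: your identification $H_3=\langle[\mathcal{P}_1^4\mathcal{P}_3^2\mathcal{P}_5^{h_1}]\rangle$, $H_4=\langle[\mathcal{P}_1^2\mathcal{P}_3^4\mathcal{P}_5^{h_1}]\rangle$ is the transpose of the paper's; since the statement asserts each of $K_3,K_4$ only up to the ``or'', this is harmless, but it reflects that evaluating $\mathcal{A}\mathcal{X}^j$ modulo fifth powers depends on relations among $[\mathcal{P}_1],[\mathcal{P}_3],[\mathcal{P}_5]$ in the $(5,5)$-group that neither you nor the paper makes explicit.
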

\begin{proof}
\item[$(1)$] According to theorem \ref{6 exten case 2}, we have that $\tau^2$ permutes $k(\sqrt[5]{\pi_1^2\pi_3^4\pi_5^{h_1}})$ with $k(\sqrt[5]{\pi_1^4\pi_3^2\pi_5^{h_1}})$ and $k(\sqrt[5]{x_1})$ with $k(\sqrt[5]{\pi_3\pi_5^{h_1}})$, and sets $k(\sqrt[5]{x_2})$, $k(\sqrt[5]{\pi_1\pi_3\pi_5^{2h_1}})$. We determine first the six subgroups $H_i$ of $C_{k,5}$. We have that $C_{k,5}\,=\,\langle\mathcal{A}, \mathcal{X}\rangle$, where $H_1\,=\,C_{k,5}^+\,=\,\langle \mathcal{A} \rangle$ and $H_6\,=\,C_{k,5}^-\,=\,\langle \mathcal{X} \rangle$. By theorem \ref{generator case 2} we have $\mathcal{A}\,=\,[\mathcal{P}_1\mathcal{P}_3\mathcal{P}_5^{2h_1}]$ and $\mathcal{X}\,=\,[\mathcal{P}_1\mathcal{P}_3^4]$, then $\mathcal{A}\mathcal{X}\,=\,[\mathcal{P}_1\mathcal{P}_5^{h_1}]^2$, $\mathcal{A}\mathcal{X}^2\,=\,[\mathcal{P}_1^2\mathcal{P}_3^4\mathcal{P}_5^{h_1}]^4$, $\mathcal{A}\mathcal{X}^3\,=\,[\mathcal{P}_1^4\mathcal{P}_3^2\mathcal{P}_5^{h_1}]$ and $\mathcal{A}\mathcal{X}^4\,=\,[\mathcal{P}_3\mathcal{P}_5^{h_1}]^3$. Hence $H_2\,=\,\langle [\mathcal{P}_1\mathcal{P}_5^{h_1}]\rangle$, $H_3\,=\,\langle [\mathcal{P}_1^2\mathcal{P}_3^4\mathcal{P}_5^{h_1}] \rangle$, $H_4\,=\,\langle [\mathcal{P}_1^4\mathcal{P}_3^2\mathcal{P}_5^{h_1}]\rangle$ and $H_5\,=\,\langle [\mathcal{P}_3\mathcal{P}_5^{h_1}] \rangle$. Since $\tau^2$ sets $k(\sqrt[5]{\pi_1\pi_3\pi_5^{2h_1}})$ and $k(\sqrt[5]{\pi_1\pi_3^4})$, so if $K_1\,=\,k(\sqrt[5]{\pi_1\pi_3\pi_5^{2h_1}})$ then $K_6\,=\,k(\sqrt[5]{\pi_1\pi_3^4})$ and inversly. By class field theory, the fact that $H_i\,(i = 2,5)$ correspond to $K_i\,(i = 2,5)$ mean that $\mathcal{P}_1\mathcal{P}_5^{h_1}$ splits completly in $K_2$ and $\mathcal{P}_3\mathcal{P}_5^{h_1}$ splits completly in $K_5$. As $\pi_1\pi_5^{h_1}$ divides $\pi_1^2\pi_3^4\pi_5^{h_1}$ and $\pi_1^4\pi_3^2\pi_5^{h_1}$, by proposition \ref{prKummer}, $\pi_1\pi_5^{h_1}$ can not split in $k_0{\sqrt[5]{\pi_1^2\pi_3^4\pi_5^{h_1}}}$ and $k_0{\sqrt[5]{\pi_1^4\pi_3^2\pi_5^{h_1}}}$, this equivalent to say that $\mathcal{P}_1\mathcal{P}_5^{h_1}$ can not split completly in $k{\sqrt[5]{\pi_1^2\pi_3^4\pi_5^{h_1}}}$ and $k{\sqrt[5]{\pi_1^4\pi_3^2\pi_5^{h_1}}}$. By the same reasoning we have that  
$\mathcal{P}_3\mathcal{P}_5^{h_1}$ can not split completly in $k{\sqrt[5]{\pi_1^2\pi_3^4\pi_5^{h_1}}}$ and $k{\sqrt[5]{\pi_1^4\pi_3^2\pi_5^{h_1}}}$. Hence if $K_2\,=\,k(\sqrt[5]{\pi_1\pi_5^{h_1}})$ then $K_5\,=\,k(\sqrt[5]{\pi_3\pi_5^{h_1}})$ and inversly, which allow us to deduce that if $K_3\,=\,k(\sqrt[5]{\pi_1^2\pi_3^4\pi_5^{h_1}})$ then $K_5\,=\,k(\sqrt[5]{\pi_1^4\pi_3^2\pi_5^{h_1}})$ and inversly.

\item[$(2)$]We keep the same proof as $(2)$ theorem \ref{theo capi case 1}.

\item[$(3)$]-If $K_1\,=\, k(\sqrt[5]{\pi_1\pi_3\pi_5^{2h_1}})$, then $K_6\,=\,k\Gamma_5^{(1)}\,=\,k(\sqrt[5]{\pi_1\pi_3^4})$ and we have that $[\mathcal{P}_1\mathcal{P}_3^{4}]$ capitulates in $K_6$, moreover since $C_{k,5}^+\,=\,\langle[\mathcal{P}_1\mathcal{P}_3\mathcal{P}_5^{2h_1}]\rangle \cong C_{\Gamma,5}$ then $\mathcal{P}_1\mathcal{P}_3\mathcal{P}_5^{2h_1}\,=\,j_{k/\Gamma}(\mathcal{J})$ such that $C_{\Gamma,5}\,=\,\langle\mathcal{J}\rangle$, then $[\mathcal{P}_1\mathcal{P}_3\mathcal{P}_5^{2h_1}]$ capitulates in $K_6$. As $C_{k,5}\,=\,\langle [\mathcal{P}_1\mathcal{P}_3\mathcal{P}_5^{2h_1}], [\mathcal{P}_1\mathcal{P}_3^4]\rangle$, then all classes capitulate in $K_6\,=\,k(\sqrt[5]{\pi_1\pi_3^4})$. We determine the possible types of capitulation $(i_1,i_2,i_3,i_4,i_5,i_6)$. We have that $i_6\,=\,0$, $K_2\,=\,K_5^{\tau^2}$, $K_3\,=\,K_4^{\tau^2}$ and $C_{k,5}\,=\,C_{k,5}^{\tau^2}$. If $i_1\neq 0$ we have $i_1 = 1$. $i_2$ and $i_5$ are both nulls or non nulls, so if $i_2$ and $i_5$ $\neq 0$, then $(i_2, i_5)\,=\,(2,5)$ or $(5,2)$ depending on $\mathcal{P}_1\mathcal{P}_5^{h_1}$ splits completely in $k(\sqrt[5]{\pi_1\pi_5^{h_1}})$ or in $k(\sqrt[5]{\pi_3\pi_5^{h_1}})$. Similarly if $i_3$ and $i_4$ $\neq 0$, then $(i_3, i_4)\,=\,(3,4)$ or $(4,3)$. Hence the possible types given are proved.\\
-If $K_1\,=\, k(\sqrt[5]{\pi_1\pi_3^4})$ then $K_6\,=\,k\Gamma_5^{(1)}\,=\,k(\sqrt[5]{\pi_1\pi_3\pi_5^{2h_1}})$ and we have 
$C_{k,5}^+\,=\,\langle[\mathcal{P}_1\mathcal{P}_3\mathcal{P}_5^{2h_1}]\rangle$ capitulates in $K_6$, the possible values of $i_2, i_3, i_4, i_5$ are as above, $(i_2, i_5)\,=\,(2,5)$ or $(5,2)$ if they are non nulls, $(i_3, i_4)\,=\,(3,4)$ or $(4,3)$ if they are non nulls. If $i_1 \neq 0$ then $i_1 = 6$ because $H_6\,=\,\langle[\mathcal{P}_1\mathcal{P}_3^4]\rangle$, and if $i_6 \neq 0$ then $i_1 = 1$ because $H_1\,=\,\langle[\mathcal{P}_1\mathcal{P}_3\mathcal{P}_5^{2h_1}]\rangle$. Hence the possible types given are proved.
\end{proof}

\subsection{The case $n\,=\,5^ep\,\not\equiv\,\pm1\pm7\,(\mathrm{mod}\,25)$, where $p\,\not\equiv\,1\,(\mathrm{mod}\,25)$}
Let $k\,=\,\Gamma(\zeta_5)$ be the normal closure of $\Gamma\,=\,\mathbb{Q}(\sqrt[5]{n})$, where $n\,=\,5^ep$ such that $p\,\not\equiv\,1,(\mathrm{mod}\,25)$ and $e\in \{1,2,3,4\}$. Since $n\,=\,5^ep\,\not\equiv\,\pm1\pm7,(\mathrm{mod}\,25)$ then $\lambda\,=\,1-\zeta_5$ is ramified in $k/k_0$. Let $\pi_1, \pi_2, \pi_3$ and $\pi_4$ primes of $k_0$ such that $p\,=\,\pi_1\pi_2\pi_3\pi_4$. Let $\mathcal{P}_1, \mathcal{P}_2, \mathcal{P}_3, \mathcal{P}_4$ and $\mathcal{I}$ prime ideals of $k$ above $\pi_1, \pi_2, \pi_3, \pi_4$
and $\lambda$, we have $\mathcal{P}_i^5\,=\,\pi_i\mathcal{O}_k$ and 
$\mathcal{I}^5\,=\,\lambda\mathcal{O}_k$. According to [\ref{FOU1}, theorem 1.1], for this case of the radicand $n$, we have that $\zeta_5^i(1+\zeta_5)^j$ is not norm of element in $k-\{0\}$ for any exponents $i$ and $j$, then we have $C_{k,5}\,=\,C_{k,5}^{(\sigma)}\,=\,C_{k,s}^{(\sigma)}$. Hence the results  about the six intermediate extensions of $k_5^{(1)}/k$, the generators of $C_{k,5}$ and the capitulation problem in this case are the same as case $2$ by substituting $q$ by $5$, $\pi_5$ by $\lambda\,=\,1-\zeta_5$ and $\mathcal{P}_5$ by $\mathcal{I}$. 

\begin{center}
\section{Numerical examples}
\end{center}
The task to determine the capitulation in a cyclic quintic extension of a base field of degree $20$, that is, in a field of absolute degree $100$, is definitely far beyond the reach of computational algebra systems like MAGMA and Pari/GP. For this reason we give exemples of a pure metacyclic fields $k\,=\,\mathbb{Q}(\sqrt[5]{n}, \zeta_5)$ such that $C_{k,5}$ is of type $(5,5)$ and $C_{k,5}\,=\,C_{k,5}^{(\sigma)}$.\\

\begin{center}
Table 1: $k\,=\,\mathbb{Q}(\sqrt[5]{n}, \zeta_5)$ with $C_{k,5}$ is of type $(5,5)$ and $C_{k,5}\,=\,C_{k,5}^{(\sigma)}$.\\

\begin{tabular}{|c c c c|c|c c c c c|}
\hline 
  $n$ & $h_{k,5}$ & $C_{k,5}$ & rank $(C_{k,5}^{(\sigma)})$&  & $n$ & $h_{k,5}$ & $C_{k,5}$ & rank $(C_{k,5}^{(\sigma)})$ &\\ 
\hline 
$55$ &  25 & $(5,5)$ & 2& &$1457$ &  25 & $(5,5)$ & 2 & \\ 
$655$ &  25 & $(5,5)$ & 2& &$6943$ &  25 & $(5,5)$ & 2 &\\ 
$1775$ &  25 & $(5,5)$ & 2& &$8507$ &  25 & $(5,5)$ & 2 & \\ 
$1555$ &  25 & $(5,5)$ & 2& &$12707$ &  25 & $(5,5)$ & 2 &\\
$2155$ &  25 & $(5,5)$ & 2& &$151$ &  25 & $(5,5)$ & 2 & \\ 
$5125$ &  25 & $(5,5)$ & 2& &$1301$ &  25 & $(5,5)$ & 2 &\\
$8275$ &  25 & $(5,5)$ & 2& &$2111$ &  25 & $(5,5)$ & 2 & \\ 
$30125$ &  25 & $(5,5)$ & 2& &$251^2$ &  25 & $(5,5)$ & 2 &\\ 
$38125$ &  25 & $(5,5)$ & 2& &$601^3$ &  25 & $(5,5)$ & 2 & \\ 
$113125$ &  25 & $(5,5)$ & 2& &$2131^2$ &  25 & $(5,5)$ & 2 &\\
$93$ &  25 & $(5,5)$ & 2& &$1901^4$ &  25 & $(5,5)$ & 2 & \\ 
$382$ &  25 & $(5,5)$ & 2& &$1051^4$ &  25 & $(5,5)$ & 2 &\\
$943$ &  25 & $(5,5)$ & 2& &$1801^3$ &  25 & $(5,5)$ & 2 &\\

\hline 
\end{tabular} 
\end{center}

\newpage


\end{document}